\newtheorem{counter}{Counter}[section]
\newtheorem{lem}[counter]{Lemma}
\newtheorem{defn}[counter]{Definition}
\newtheorem{thm}[counter]{Theorem}
\newtheorem{cor}[counter]{Corollary}
\newtheorem{problem}[counter]{Problem}
\numberwithin{equation}{section}
\newcommand{\R}{\mathbb{R}}
\renewcommand{\H}{\mathcal{H}} 
\renewcommand{\L}{\mathcal{L}}
\newcommand{\C}{\mathcal{C}} 
\newcommand{\A}{\mathcal{A}}
\newcommand{\E}{\mathcal{E}}
\newcommand{\lra}{\longrightarrow} 
\newcommand{\Lra}{\Longrightarrow}
\newcommand{\ra}{\rightarrow} 
\newcommand{\da}{\downarrow}
 \newcommand{\sse}{\subseteq}
\renewcommand{\~}{\tilde}
 \newcommand{\p}{p} 
 \newcommand{\sh}{\sharp}
\newcommand{\rhu}{\;{\overset{*}{\rightharpoonup}}\; }
\newcommand{\rdh}{\; {\overset{{{d_\H}}}{\ra}} \;}
\newcommand{\fal}{\forall}
\newcommand{\8}{\infty} 
\newcommand{\te}{\textnormal}
\newcommand{\vph}{\varphi}
\newcommand{\vep}{\varepsilon} 
\newcommand{\al}{\alpha}
\newcommand{\gm}{\gamma}
 \newcommand{\ggm}{\Gamma}
\newcommand{\sm}{\Sigma}
\newcommand{\la}{\lambda} 
\renewcommand{\vartheta}{\Theta}
\renewcommand{\th}{\theta}
\newcommand{\disp}{\displaystyle}
\DeclareMathOperator{\supp}{{\operatorname{supp}}}
\DeclareMathOperator{\diam}{{\operatorname{diam}}}
\DeclareMathOperator{\argmin}{{\operatorname{argmin}}}
\DeclareMathOperator{\Span}{{\operatorname{Span}}}
\newenvironment{listi}
  {\begin{list}
 {(\roman{broj})}
{ \usecounter{broj}}}
{   \end{list} }
\newcommand{\red}{\color{black}}
\definecolor{mygreen}{rgb}{0.1,0.75,0.2}
\newcommand{\grn}{\color{black}}
\newcommand{\nc}{\normalcolor}
\newcommand{\D}{\diam \supp(\mu)}
\renewcommand{\d}{\,{\operatorname{d}}}
\newcommand{\rca}{\,\overset{\C}{\ra}\,}
\newcommand{\EE}{\E_\mu^{\la,\vep,\p}}
\begin{document}
\newcounter{broj}

\title[Curvature penalization]{Average-distance problem with curvature penalization
	for data parameterization: regularity of minimizers}

\author{Xin Yang Lu} \address{Department of Mathematics and Statistics, Lakehead University,
	Thunder Bay, ON, P7B5E1, \textsc{Canada},  
	AND
	Department of Mathematical Sciences, McGill University,
	Montr\'eal, QC, H3A0B9, \textsc{Canada}, email: xlu8@lakeheadu.ca}

\author{Dejan Slep\v{c}ev} \address{Department of Mathematical Sciences, Carnegie Mellon University,
	Pittsburgh, PA, 15213, \textsc{United States}, email: {slepcev@math.cmu.edu}}

\date{}

\subjclass{49Q20, 49Q10, 35B65}

\keywords{average-distance problem, elastica functional, Willmore energy, curve fitting}

\begin{abstract}
We propose a model for finding one-dimensional structure in a given measure. 
Our approach is based on minimizing an objective functional which combines 
 the average-distance functional to measure the quality of the approximation and penalizes the curvature, similarly to the elastica functional.
Introducing the curvature penalization overcomes some of the shortcomings of the average-distance functional, in particular the lack of regularity of minimizers.
We establish existence, uniqueness and regularity of minimizers of the proposed functional. In particular  we establish $C^{1,1}$ estimates on the minimizers.
 \end{abstract}
 
\maketitle

%
%

\section{Introduction}
The average-distance problem was introduced by 
{Buttazzo, Oudet} and {Stepanov} in \cite{BOS}. A number of 
properties of the solutions were established by Buttazzo and Stepanov in \cite{BS1,BS2}, and by
Paolini and Stepanov in \cite{PaoSte}. An alternative variant,
often referred to as {\em ``penalized formulation''}, was introduced by
{Buttazzo, Mainini} and {Stepanov} in \cite{BMS}. \grn Here  we consider  a more general form where 
we consider the general power of the distance $p \in [0, \infty)$, while the works above focused on $p=1$: \nc
\begin{problem}\label{main pen}
Given \grn $p \in [0, \infty)$ \nc, $d\geq 2$, a finite measure $\mu\geq 0$ with compact support in $\R^d$, and $\lambda>0$,
minimize
\begin{equation*} 
G_\mu^\la(\sm):= \int_{\R^d}{d(x,\sm)^p} \d\mu(x)+\lambda \H^1(\sm), 
\qquad {d(x,\sm):= \inf_{y\in\sm} |x-y|},
\end{equation*}
with the unknown $\sm$ varying in the family
$$\A:=\{\sm\sse\R^d: \sm\ \text{compact, path-wise connected and } \H^1(\sm)<+\8 \}.$$
\end{problem}
Problem \ref{main pen} finds applications in urban planning (see
for instance
Buttazzo, Pratelli and Stepanov \cite{BAdd1},
Buttazzo and Santambrogio \cite{BAdd3}; a review is available in
Buttazzo and Santambrogio \cite{BAdd2}, and in the monograph by
Buttazzo, Pratelli, Solimini and Stepanov \cite{BAdd4}) and  is closely related to several functionals in data analysis which look for low dimensional structures in data by minimizing the loss combined with a regularization term  (see for instance  Smola, Mika, Sch\"olkopf and Williamson \cite{Smola}). 
In particular it is closely related to some functionals that arise as regularizations of principal curves, as we discuss below. \nc



To consider the application of the average-distance problem in data parameterization consider a compactly supported measure $\mu$ describing the data distribution. Denote by $\sm$ (the unknown) an one-dimensional object which parameterizes the, potentially noisy, data distribution. Then
 ${\disp{\int_{\R^d}  d(x,\sm)^p \d\mu}}$ represents the approximation error, while $\lambda\H^1(\sm)$ is a cost
associated to the complexity of such representation. Minimizing $G_\mu^\lambda$ 
is thus determining the
``best'' parameterization, balancing approximation error and complexity. 
In practice one would often deal with the empirical measure $\mu_n$ of a sample of $\mu$, rather that $\mu$ itself, but note that in our setup $\mu$ can be a general measure, and thus one can consider $\mu_n$ in the place of $\mu$ when considering the basic propertied of the minimizers when the empirical loss is used. 

It is often of interest to use even simpler objects to approximate measures, namely parameterized curves. 
 Let
$$\C^*:=\{\vph^*:[0,1]\lra \R^d: \vph^* \text{ Lipschitz and }
|(\vph^*)'| \text{ is constant } \L^1\text{-a.e.}\}. $$
For curves $\vph^*\in\C^*$, 
we define its ``length'' $L(\vph^*)$ 
 as its total variation:
 $$L(\vph^*):=\|\vph^*\|_{TV([0,1])}=\sup_n\left(\sup_{0\leq t_0 <\cdots<
 t_{n-1}<t_n=1} \sum_{j=1}^n |\vph^*(t_{j})-\vph^*(t_{j-1})|\right).$$ 
For the sake of simplicity, 
in the following we will work with arc-length (instead
of constant speed) parameterizations. Let
$$\C:=\{\vph:[0,a]\lra \R^d: a\geq 0,\  \vph \text{ Lipschitz
and } |\vph'|=1 \ \L^1\text{-a.e.}\}. $$
Similarly, given $\vph\in \C$, $\vph:[0,a]\lra \R^d$,
we define its ``length''
$L(\vph)$ as its total variation.
The map ``reparameterization by constant speed''
\begin{equation}\label{Phi}
\Phi:\C\lra \C^*,\qquad \Phi(\vph) = \vph^*,\qquad
\vph^*:[0,1]\lra \R^d,\quad \vph^*(t):=\vph(ta)
\end{equation}
will be frequently used.
Thus, by construction, the domain of $\vph$ is $[0,L(\vph)]$.
More details about the space $\C$, including its topology, will be presented in Section \ref{prel}.
For any given curve $\psi$, we denote by $\ggm_\psi$
 its { image} (which is independent of the parameterization).

The average-distance problem \grn  for parameterized curves is the following: \nc
\begin{problem}\label{main1}
 Given \grn $p \in [1, \infty)$ \nc, $d\geq 2$, a finite measure $\mu\geq 0$ with compact support in $\R^d$,  and $\lambda>0$, 
  minimize
$$
E_\mu^\lambda(\vph):=  \int_{\R^d} d(x,\ggm_\vph)^p  \d\mu +\la L(\vph).$$
 over all $\vph\in \C$.
\end{problem}
\grn In mathematical literature on average-distance problem \cite{BMS, BOS, BS1,BS2, PaoSte} the power $p=1$ is considered, while in applications to machine learning $p=2$ is the most common. \nc

Problem \ref{main1} is an alternative to the classic ``principal curve''  {   introduced by Hastie and Stuetzle \cite{Add2} and further studied by  Duchamp and Stuetzle \cite{AddGeom}, for discovery and parameterization of one dimensional structures in data. }
Tibshirani \cite{Add4} introduced principal curves with curvature penalty. 
In K\'egl, Krzyzak, Linder, Zeger \cite{kegl2000}, the authors  studied the principal curves with length constraint which is posed as a minimization of  the mean squared distance and is hence similar to 
Problem \ref{main1}. They also introduced an iterative algorithm to find the optimal cuurves. Subsequently,
Biau and Fisher \cite{biau2011} proposed an equivalent formulation
of principal curves, with the advantage of the self-consistency condition being explicit,
and studied the problem with either length or curvature bounds.
{  More recently, Delattre and Fischer \cite{Dellatre-Fischer}
proved several theoretical properties, such as lack of self consistency,
 bounded curvature as a measure and absence of multiple points,
of minimizers of the principal curves problem with length constraint. In terms of the regularity of curvature these results are similar (while the techniques are different) to results in \cite{LuSlepcev, LuSlepcev16} where the penalized problem was studied.
 Kirov and one of authors \cite{KirSle17} relaxed the Problem \ref{main1} to allow for multiple curves, and developed an efficient algorithm for both Problem \ref{main1} and the relaxed problem.
} \nc
Delicado \cite{delicado} introduced a new notion of principal curves based on the so-called ``principal oriented points''.
Gerber and Whitaker \cite{gerber2013} studied a relaxed notion of principal curves, 
(i.e. ``weak principal curves'') and have shown it to be equal to the conditional expectation curve  of a projection distance functional. 
A manifold learning algorithm based on the gradient descent of the projection
distance functional (see e.g. \cite[(5)]{gerber2009dimensionality}) 
was then formulated in Gerber, Tasdizen, Whitaker \cite{gerber2009dimensionality}.
Another notion of principal curves, defined based only on local quantities such as the
 gradient and Hessian matrix of an estimate of the  probability density
was proposed in Ozertem and Erdogmus \cite{ozertem2011locally}.
 Furthermore, Problem \ref{main1} is
related to the lazy traveling salesman problem, see for instance Polak and Wolanski \cite{WolPol}.
\medskip

\grn Let us recall what is known about the obstacles to regularity  of minimizers of Problem \ref{main1}: \nc
\begin{enumerate}
\item[($*$)] In \cite{Slepcev} it has been proven that, even if the reference measure $\mu$ 
 satisfies
$\mu\ll\L^d$ and $\d\mu/\d\L^d$ is $C^\8$ regular,
  Problem \ref{main pen} may still admit minimizers which are simple curves failing to be $C^1$ regular.
Moreover for any corner (i.e., point where $C^1$ regularity fails), 
 a positive amount of mass is projected on it (see 
 \cite[Lemma~2.1]{LuSlepcev} for a detailed
discussion about ``projections'').

\item[($**$)] In \cite{Lu} it has been proven that in general,
even when the reference measure $\mu$ has the form
$\mu=\sum_{j=1}^\8 a_j \chi_{A_j}$ (here $\chi$ denotes the characteristic function
of the subscripted set),
for suitable choices of parameters $\la$, $\{a_j\}$ and subsets $\{A_j\}$ ($j=1,2,\cdots$),
 Problem \ref{main pen} can admit minimizers
which are simple curves for which $C^1$ regularity fails on a non closed set.
\end{enumerate}
Noting that if a minimizer $\sm$ of Problem \ref{main pen} is 
a simple curve, then it admits a parameterization $\vph\in \C $
minimizing Problem \ref{main1}, it follows that
the formulation of Problem \ref{main1} presents some drawbacks
when used in data parameterization:

\begin{itemize}
\item fact ($*$) implies that a positive fraction of the data
{ could be} projected onto a single point. This is undesirable in data parameterization, 
since it results in a loss of information.

\item In \cite[Lemma~3.1]{LuSlepcev} 
however it has been proven that the aforementioned issue is somewhat inevitable,
 since for any {\em endpoint}, at least mass
$\la>0$ is projected on it. Nevertheless, this implies
that there are at most $\lfloor 1/\la\rfloor$ endpoints. Thus, from a practical point of view,
endpoints can be ``singled out''
and analyzed with ad hoc arguments, mitigating
the aforementioned issue. However fact ($**$) proves that the set of corners 
can be quite irregular, which
  makes ``singling out''
corners much more difficult than ``singling out'' endpoints. 
\end{itemize}
It has been proven, by Lemenant in \cite{Lem1} and by the authors in \cite{LuSlepcev}, \grn as well as by Delattre and Fischer \cite{Dellatre-Fischer} for the constrained problem,  \nc 
that regularity and curvature
of any given subset of a minimizer is related to \grn the amount for mass when $p=1$ and otherwise the $p-1$-moment of mass that projects of the subset considered. \nc
We propose a modification of Problem \ref{main1}, by adding
a term penalizing the integrated squared curvature. 
We will consider the general case, where the power $p$ belongs to $[1, \infty)$.

\begin{problem}\label{main}
 Given $d\geq 2$, a {  finite measure $\mu \geq 0$ compactly supported in $\R^d$}, parameters $\lambda,\vep>0$, 
 $\p\geq 1$, minimize
\begin{equation}
\E_\mu^{\lambda,\vep,\p}(\vph):=
\begin{cases}
\int_{\R^d} d(x,\ggm_\vph)^\p  \d\mu +\la L(\vph) + \vep 
\int_0^{L(\vph)} |\kappa_\vph|^2 \d\L^1, & \text{if }\vph\in H^{2}([0,L(\vph)];\R^{ d}) {\cap \C} ,\\
+\8& \text{otherwise},
\end{cases}
\label{curv}
\end{equation}
 %
with $\kappa_\vph { := \frac{d}{dt}\frac{\vph'}{\|\vph'
	\|}} :[0,L(\vph)]\lra { \R}$ denoting the
curvature. 

\end{problem}
{Note also
$$(\fal \vph\in \C, \vep>0)\quad \EE(\vph)<+\8 \te{ implies that } \vph \text{ is } C^1 \text{ regular, } $$
\grn since by Sobolev inequality in one dimension $H^2$ embeds in $C^1$. 
The term $\int_0^{L(\vph)} \left|\kappa_\vph\right|^2\d\L^1$
will be often referred to as {\em ``curvature term''}. }
For future reference, given a curve $\psi\in \C$,
the notation $\kappa_\psi$ will denote the curvature 
of $\psi$.
Existence of minimizers will be proven in Theorem \ref{ext}. 

The integrated squared curvature penalization is related
to the Willmore energy, introduced in differential geometry and widely used in image analysis.
Among the vast literature regarding Willmore energy, 
 we cite Bretin, Lachuad and Oudet \cite{BLO}, {\color{black} where, the authors used the Willmore energy to develop an algorithm
 	to reconstruct the contours of digital data}, 
Du, Liu, Ryham and Wang \cite{DLRW}, Dondl, Mugnai and R\"oger \cite{DMR},
{\color{black}which are dedicated to formulating an approximation of the Willmore energy via the more
	easily
	 treated
	phase field functions.
}


We note that curvature penalization related to principal curve problem has been considered by Tibshirani \cite{Add4}. 
 Indeed the functional studied in \cite{Add4} has strong connections to ours. The focus of that paper however was on modeling and numerical algorithms. 
 
We should point out that the curvature penalty was actually already  introduced in the elastica functional, which is one of the earliest  functionals of the calculus of variations.  \grn
The elastica problem was proposed in 1691 by Jacob Bernoulli, who studied
``the bendings or curvatures of beams, drawn bows, or of springs of any kind, caused
by their own weight or by an attached weight or by any other compressing forces....'', \cite{Goss}, page 17. 
Indeed, if the measure $\mu$ in Problem \ref{main} is purely atomic, then the term
$\int_{\R^d} d(x,\ggm_\vph)^\p  \d\mu $ is a type of ``pulling'' force on the minimizer,
while length (resp. curvature) term $L(\gm)$ (resp. $\int_0^{L(\vph)} |\kappa_\vph|^2 \d\L^1$)
account for the elastic forces.
The elastica problem was  studied by Bernoulli \cite{Bernoulli1694} who provided a partial solution.
Subsequently, Euler described in \cite{Euler} two radically different solutions. Both of these works played a fundamental role in the history of the calculus of variations and their influence extends to this day. For 
a comprehensive look at the history of elastica problem see the article of Levien \cite{Levien} and the doctoral thesis of Goss \cite{Goss}. 
\nc

 \nc

Problem \ref{main} is also related to the notion of splines \cite{deBoor}.  In particular the cubic smoothing spline for the given collection of data points $\{(x_i,y_i) \::\: x_i \in [0,1],\; y_i\in \R\}$ where $x_i < x_{i+1}$ for all $i=1,\dots, n-1$
is the function $f:[0,1] \to \R$ minimizing 
\[ \frac{1}{n} \sum_{i=1}^n (y_i - f(x_i))^2 +  \vep \int_0^1 f''(x) dx. \]
Between the points $x_i$ and $x_{i+1}$ the function $f$ is a cubic polynomial, while at the points $x_i$, $f$ is differentiable, but the second derivative typically has a jump. 
As in our problem $\vep$ controls the level of smoothness. In the $\vep \to \infty$ limit, the problem converges to the best linear fit, while in the $\vep \to 0+$ limit the solutions converge to the interpolating cubic spline. 

The main difference compared to our problem  is that if one considers splines as curves in 2D, $x \mapsto (x, f(x))$ then one coordinate, $x$ is given a-priory, and it already specifies the order in which the points are visited. The lack of information/requirement about the order in which the data points are to be approximated is a major source of complication for our problem. Among other things it is the reason why one needs to add the length penalty, as otherwise the problem does not have a minimizer. 
\nc 
\medskip

The connection between Problems \ref{main} and \ref{main1}
will be discussed in Lemma \ref{gamma}.
The main goal is to prove that 
minimizers are $C^{1,1}$ regular. 
The main result is:
\begin{thm}\label{1-1}
Given $d\geq 2$, a finite measure $\mu$ \grn with compact support in $\R^d$, \nc and parameters $\la,\vep>0$, $\p\geq1$, any minimizer $\vph$ of $\EE$
 is $C^{1,1}$ regular and
$\vph'$ is 
Lipschitz continuous, with Lipschitz constant not greater than
\begin{equation}\label{Y}
Y\big(\!\D,\la,\p,\vep\big):=\sqrt{ \frac{2(2\vartheta+\la)}{\vep} },
 \end{equation}
where 
\begin{equation}\label{cal2}
\vartheta:=\p D^{\p-1},\qquad D:=2\D .
\end{equation}
\end{thm}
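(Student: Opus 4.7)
Since $\EE(\vph)<\infty$, the minimizer $\vph$ lies in $H^2([0,L(\vph)];\R^d)\cap\C$; by the one-dimensional Sobolev embedding $\vph\in C^1$. I would work in arc-length parameterization so that $|\vph'|\equiv 1$ and $\kappa_\vph=\vph''$ almost everywhere, whereupon the claim reduces to the essential bound $\|\vph''\|_{L^\infty}\le Y$.

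\textbf{Strategy.} The plan is a localized comparison argument. At any Lebesgue point $s_0\in(0,L(\vph))$ of $\vph''$ and for each small $h>0$, I would construct a competitor $\tilde\vph_h\in H^2\cap\C$ that coincides with $\vph$ outside $[s_0-h,s_0+h]$ and inside that window replaces $\vph$ by a cubic Hermite interpolant matching the values $\vph(s_0\pm h)$ and the tangents $\vph'(s_0\pm h)$, followed by arc-length reparameterization of the full concatenated curve. Minimality would then give $0\le\EE(\tilde\vph_h)-\EE(\vph)=A_h+B_h+C_h$, with $A_h,B_h,C_h$ the changes in the distance, length and curvature terms; the goal is to send $h\to 0$ and obtain a pointwise inequality at $s_0$.

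\textbf{Term-by-term estimates.} By the Lebesgue point property the original curvature contributes $2h|\vph''(s_0)|^2+o(h)$, while the interpolant contributes $O\!\bigl(|\vph'(s_0+h)-\vph'(s_0-h)|^2/h\bigr)=o(h)$ (using the Cauchy--Schwarz bound $|\vph'(s_0+h)-\vph'(s_0-h)|\le(2h)^{1/2}\|\vph''\|_{L^2}$), giving $C_h=-2h\vep|\vph''(s_0)|^2+o(h)$. Chord-versus-arc estimates together with the continuity of $\vph'$ at $s_0$ yield $B_h\le 2h\la+o(h)$. For $A_h$ I would use the uniform closeness $\|\tilde\vph_h-\vph\|_{L^\infty}\le Ch$, hence $d_H(\ggm_\vph,\ggm_{\tilde\vph_h})\le Ch$; combined with the elementary bound $|a^\p-b^\p|\le\p D^{\p-1}|a-b|$ valid on $[0,D]$, and with the observation that only $\mu$-mass whose $\ggm_\vph$-projection lies in $\vph([s_0-h,s_0+h])$ can contribute positively (since for the complementary mass the image $\ggm_{\tilde\vph_h}$ still contains the old projection), one obtains $A_h\le 4h\vartheta+o(h)$ after a careful projection-based accounting. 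Summing the three and letting $h\to 0$ would deliver $\vep|\vph''(s_0)|^2\le 2(2\vartheta+\la)=\vep Y^2$, which is the desired pointwise bound.

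\textbf{Main obstacle.} The distance term is the delicate step: a naive Hausdorff-distance bound produces an unwanted factor of $|\mu|$, so the improvement to the $|\mu|$-independent constant in $Y$ requires the localization argument described above, tracking only the mass whose nearest-point projection actually moves under the modification and pairing this with the monotonicity of $d(\cdot,\ggm)^\p$ under changes made outside the affected region of the image. Secondary technical points I foresee are the verification that the cubic Hermite interpolant, after arc-length reparameterization of the full concatenated curve, remains in $H^2\cap\C$, and the quantitative control of its curvature energy as $o(h)$, which relies on $\vph'(s_0\pm h)\to\vph'(s_0)$ together with the $L^2$ modulus of continuity provided by $\vph''\in L^2$.
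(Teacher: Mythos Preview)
Your competitor does not do what you need at the crucial step. The cubic Hermite interpolant matching $\vph(s_0\pm h)$ and $\vph'(s_0\pm h)$ has curvature energy $2h|\vph''(s_0)|^2+o(h)$, not $o(h)$: at a Lebesgue point the curve is, to order $o(h^2)$, a parabola with constant second derivative $\vph''(s_0)$, and the cubic Hermite interpolant of a quadratic is that quadratic itself. Your Cauchy--Schwarz step goes the wrong way (it gives $\int|q''|^2\ge|\vph'(s_0+h)-\vph'(s_0-h)|^2/(2h)$, a \emph{lower} bound), and in any case $|\vph'(s_0+h)-\vph'(s_0-h)|\sim 2h|\vph''(s_0)|$ at a Lebesgue point, so $|\vph'(s_0+h)-\vph'(s_0-h)|^2/h$ is $O(h)$, not $o(h)$. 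In fact all three increments $A_h,B_h,C_h$ are $o(h)$ for this competitor (the interpolant agrees with $\vph$ to second order, so its image deviates by $O(h^2)$ and its length differs by $O(h^3)$), and the inequality $0\le A_h+B_h+C_h$ degenerates to $0\le o(h)$, yielding nothing. A further red flag: your claimed estimates would give $|\vph''|\le Y/\sqrt{2}$, strictly sharper than the theorem.

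The paper's competitor is built to guarantee a \emph{definite} curvature saving. One dilates the arc $\vph([t_0,t_1])$ by the homothety of center $\vph(t_0)$ and ratio $2$, then translates $\vph([t_1,L(\vph)])$ by a vector of length $\xi=|\vph(t_1)-\vph(t_0)|$ to reconnect. Dilation by $2$ exactly halves $\int|\kappa|^2$ on that piece, so the curvature term drops by $\tfrac12\int_{t_0}^{t_1}|\kappa_\vph|^2\ge\tfrac12 M^2\xi$ for any $M<\limsup_{t_1\to t_0^+}|\vph'(t_1)-\vph'(t_0)|/|t_1-t_0|$. The price is an increase in length of $\xi$ and, since the new image lies within $\xi$ of the old one, an increase in the distance term of at most $2\vartheta\xi$. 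Minimality then gives $\tfrac12\vep M^2\le 2\vartheta+\la$, i.e.\ $M\le Y$. The idea you are missing is that the competitor must lower the curvature energy by a fixed \emph{fraction}; any curve matching first-order boundary data on a short interval is tangent to the local parabola and cannot achieve this.
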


Note that
$Y\big(\!\D,\la,\p,\vep\big)$ 
diverges as 
$\vep\ra 0$: this is indeed necessary, in view of Lemmas \ref{gamma} and \ref{non reg} below.
This paper is structured as follows.
In Section \ref{prel} we present some preliminary definitions 
and results, and prove existence of minimizers
of Problem \ref{main}. 
In Section \ref{reg} we prove that minimizers of Problem \ref{main} are $C^{1,1}$ regular.
Moreover, we obtain estimates on  the mass projecting onto a subset in relation
to its length.

\section{Preliminaries} \label{prel}
The aim of this section is to present preliminary notions and results, which will be useful in Section \ref{reg}. 
In particular we will prove existence of minimizers for Problem \ref{main} in Theorem \ref{ext}. 
The first step is to endow the space $\C$ with a suitable topology. 


\grn
We define the following metric on $\C$: 
 \begin{equation} \label{def:metC}
m(\vph,\psi)=\red \min \left\{ \|\Phi(\vph)-\Phi(\psi) \|_{L^\8}, \|\Phi(\vph(L(\vph) - \cdot)-\Phi(\psi) \|_{L^\8} \right\}  \nc +{     \big|	 L(\vph)- L(\psi) 	\big|}.    \nc
\end{equation}
It is straightforward to check that $m$ is a metric on $\C$. Intuitively, the convergence with respect to $m$ is equivalent to \red orientation invariant \nc uniform convergence, 
plus convergence of length.
%
%
In proofs it is sometimes useful to identify the preferred orientation along a sequence. In particular  given a sequence $\{\vph_n\}\sse\C$ and  $\vph\in \C$ we let
\[\zeta_n :=\begin{cases}
	\Phi(\vph_n) & \text{if } \| \Phi(\vph_n) -\Phi( \vph)  \|_{L^\8(0,1)} \le 
	 \| \Phi(\vph_n(L(\vph_n)-\cdot)) -\Phi( \vph)  \|_{L^\8(0,1)} ,\\
	 \Phi(\vph_n(L(\vph_n)-\cdot))& \text{if } \| \Phi(\vph_n) -\Phi( \vph)  \|_{L^\8(0,1)} \ge 
	 \| \Phi(\vph_n(L(\vph_n)-\cdot)) -\Phi( \vph)  \|_{L^\8(0,1)} ,
	\end{cases},\]
where $\Phi$ is the function defined in \eqref{Phi}.
We note that the  sequence $\{\vph_n\}$,  converges to
$\vph$ with respect to metric $m$ and we write $\{\vph_n\}\rca \vph$ if: 

\begin{itemize}
\item[(i)] $L(\vph_n)\ra L(\vph)$ as $n \ra \*$ and
\item[(ii)] {   the sequence
	$\{\zeta_n\}$, 
	converges uniformly to $\Phi( \vph)$ as $n \ra \8$.
	  }
\end{itemize}
\nc

%
%

We note that the notion of convergence implies
that if $\vph_n\rca\vph$, then for arbitrary times $t_1<t_2$ it holds
$L( \Phi^{-1}(\zeta_{n})_{|[t_1,t_2]} ) \to L(\vph_{|[t_1,t_2]})$ as $n \to \infty$, (i.e., the lengths of restrictions converge).

\medskip

We turn to establishing some technical lemmas. {  Note that for any measure $\mu$, $\la>0$, $\p\geq 1$,
it holds
\begin{equation}\label{EE finite}
\inf_\C E_\mu^{\la,p}\leq \int_{\supp(\mu)} |x-z|^p \d\mu(x)\leq \diam(\supp(\mu))^p<+\8,
\end{equation}
where 
\begin{equation*}
 E_\mu^{\la,p}(\vph):=\int_{\R^d} d(x,\ggm_\vph)^p\d\mu(x) +\la L(\vph) ,
\end{equation*}
and
$z\in \supp(\mu)$ is an arbitrarily chosen point, since clearly $\psi: { \{0\}} \to
\{z\}$ belongs to $\C$. In particular, it can be assumed that any minimizing sequence $\{\gm_n\}$
satisfies
$$\sup_n L(\gm_n)<+\8,\qquad 
\sup_n \int_0^{L(\gm_n)} \kappa_{\gm_n}^2 \d s
<+\8. $$}

\begin{lem}\label{conv}
Given $d\geq 2$, a compact set $K\sse\R^d$, let
$\{\gm_n\}:[0,1]\lra K$ be a sequence of constant speed
parameterized curves satisfying
$$\sup_n \|\gm_n'\|_{TV}<+\8,\qquad \sup_n L(\gm_n)<+\8,$$
with $\|\cdot\|_{TV}$ denoting the total variation semi-norm.
Then \grn there exists a subsequence of $\{\gm_n\}$ which we do not relabel, \nc and 
 a curve $\gm:[0,1]\lra K$ such that
$$\begin{array}{ll}
\{\gm_n\}\ra \gm & \text{in } C^\al \text{ for any } \al\in [0,1),\\
\{\gm_n'\}\ra \gm' & \text{in } L^p \text{ for any } p\in [1,+\8),\\
\{\kappa_{\gm_n}\}\rhu \kappa_\gm & \text{in the space of signed Borel measures}.
\end{array}$$
\end{lem}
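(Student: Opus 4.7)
The plan is fourfold: first extract a uniformly convergent subsequence of $\gm_n$ via Arzel\`a--Ascoli; second, upgrade uniform to $C^\alpha$ convergence by interpolation; third, use BV compactness on $\gm_n'$ to get $L^p$ convergence of the derivatives; and fourth, pass to a weak-$*$ limit for the curvature measures.

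\textbf{Steps 1--2 (Arzel\`a--Ascoli and $C^\alpha$ convergence).} Since $\gm_n$ is constant-speed on $[0,1]$, $|\gm_n'|=L(\gm_n)$ a.e., so $\gm_n$ is Lipschitz with constant $L(\gm_n)\le C$. Together with $\gm_n([0,1])\sse K$ compact, the family is uniformly equicontinuous and uniformly bounded, so Arzel\`a--Ascoli yields a subsequence (not relabeled) and a Lipschitz limit $\gm:[0,1]\lra K$ with $\gm_n\ra \gm$ uniformly. To strengthen to $C^\alpha$, I would invoke the interpolation $\|f\|_{C^\alpha}\le C\|f\|_{C^0}^{1-\alpha}\|f\|_{\mathrm{Lip}}^\alpha$ applied to $\gm_n-\gm$: a uniform Lipschitz bound plus uniform convergence yield convergence in $C^\alpha$ for every $\alpha\in[0,1)$.

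\textbf{Step 3 ($L^p$ convergence of derivatives).} The derivatives $\gm_n'$ are uniformly bounded in $L^\8$ (since $|\gm_n'|=L(\gm_n)\le C$) and in $BV([0,1];\R^d)$ by hypothesis. Helly's selection theorem furnishes a further subsequence with $\gm_n'\ra v$ a.e. for some $v\in BV$; the $L^\8$ bound and dominated convergence then promote this to $L^p$ convergence for every $p\in[1,\8)$. Passing to the limit in $\gm_n(t)=\gm_n(0)+\int_0^t \gm_n'(s)\d s$, using Step 1 on the left and $L^1$ convergence on the right, identifies $v=\gm'$ a.e.

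\textbf{Step 4 (curvature measures).} The distributional second derivative $D\gm_n'=\gm_n''$ is a vector-valued Borel measure on $[0,1]$ with total variation equal to $\|\gm_n'\|_{TV}$, hence uniformly bounded. Banach--Alaoglu then provides a subsequence with $\gm_n''\rhu\eta$ weakly-$*$ as vector-valued Borel measures. Testing against $\vph\in C_c^\8((0,1))$ and using Step 3 ($L^1$ convergence of $\gm_n'$) identifies $\eta=\gm''$ distributionally, hence as measures. Since for the constant-speed parametrization the unit tangent is $\gm_n'/L(\gm_n)$, the curvature measure on $[0,1]$ is $\kappa_{\gm_n}=\gm_n''/L(\gm_n)$; combining the weak-$*$ convergence $\gm_n''\rhu \gm''$ with the numerical convergence $L(\gm_n)\ra L(\gm)$ gives $\kappa_{\gm_n}\rhu \kappa_\gm$ in the space of signed (vector-valued) Borel measures.

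\textbf{Main obstacle.} The bulk of the argument is classical compactness; the only delicate point is the consistent interpretation of $\kappa_{\gm_n}$ (defined via the unit tangent, and thus a rescaling of $\gm_n''$ depending on $L(\gm_n)$) and the identification of its weak-$*$ limit, which must be carried out together with the length normalization. A technicality arises in the degenerate case $L(\gm)=0$: the limit $\gm$ is then a point, $\kappa_\gm$ is trivially the zero measure, and the statement holds vacuously; this situation should be dispatched separately (or excluded by the tacit assumption that the limit is non-degenerate in the applications where the lemma will be used).
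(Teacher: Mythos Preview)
The paper does not prove this lemma at all; its entire proof reads ``For the proof we refer to \cite[Lemma~6]{Slepcev}.'' Your outline is a correct and self-contained reconstruction of the standard argument: Arzel\`a--Ascoli for the curves, Helly/BV compactness for the derivatives, and weak-$*$ compactness for the second-order measures, with the identifications done exactly as you describe. Two remarks. First, the length convergence $L(\gm_n)\to L(\gm)$ you use in Step~4 is not an extra hypothesis but a consequence of your Step~3, since $|L(\gm_n)-L(\gm)|\le \|\gm_n'-\gm'\|_{L^1}$; the same step also shows that the limit $\gm$ is again constant-speed.

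Second, your handling of the degenerate case is slightly off: it does \emph{not} hold vacuously. If $L(\gm_n)\to 0$ while $\|\gm_n'\|_{TV}$ stays bounded away from zero---for instance $\gm_n(t)=n^{-2}(\cos 2\pi n t,\sin 2\pi n t)$, which satisfies all the hypotheses with $L(\gm_n)=2\pi/n$ and $\|\gm_n'\|_{TV}=(2\pi)^2$---then the curvature measures $\kappa_{\gm_n}=\gm_n''/L(\gm_n)$ have total mass $\|\gm_n'\|_{TV}/L(\gm_n)\to\8$, so no weak-$*$ limit exists. In this regime $\kappa_\gm$ is not even defined by the paper's formula. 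Your parenthetical is the right resolution: the lemma is tacitly meant for $\liminf_n L(\gm_n)>0$, and indeed every application in the paper (Lemma~\ref{C closed}, Theorem~\ref{ext}) disposes of the case $L(\gm_n)\to 0$ separately before invoking this compactness.
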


{  It is worth noting that in our case, since we work with
curves $\gm$ with uniformly bounded energy, and hence
uniformly bounded $\int_0^{1} \kappa_{\gm}^2 \d s$,
the weak-* convergence of the curvature measures implies the weak $L^2$
convergence. }

\begin{proof}
For the proof we refer to \cite[Lemma~6]{Slepcev}.
\end{proof}

{ 

\begin{lem}\label{length C0}
Given a sequence of constant (positive) speed curves
 $\{\gm_n\}:[0,1]\to\R^d$, converging uniformly to $\gm:[0,1]\to \R^d$, 
such that 
\begin{equation}\label{finiteness}
\sup_{n}|\gm_n'|<+\8,\qquad \sup_n \int_0^{1} \kappa_{\gm_n}^2 \d s<+\8,
\end{equation}
then it holds
$$L(\gm)=\lim_{n\to +\8} L(\gm_n).$$
\end{lem}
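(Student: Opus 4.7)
The plan is to combine lower semicontinuity of length with the compactness already supplied by Lemma \ref{conv}. Since each $\gamma_n$ is parameterized at constant (positive) speed on $[0,1]$, we have $L(\gamma_n)=|\gamma_n'|$, a constant, and the hypothesis \eqref{finiteness} gives $\sup_n L(\gamma_n)<+\infty$. The uniform convergence $\gamma_n\to\gamma$ together with the uniform bound on $|\gamma_n'|$ implies that $\gamma$ itself is Lipschitz, and by standard lower semicontinuity of the total variation under uniform convergence one immediately gets
\begin{equation*}
L(\gamma)\ \leq\ \liminf_{n\to+\infty} L(\gamma_n).
\end{equation*}

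The nontrivial direction is $\limsup_n L(\gamma_n)\leq L(\gamma)$, and this is where the curvature bound in \eqref{finiteness} must be used: without it, oscillatory sequences could cause the length to drop in the limit. The idea is to upgrade the uniform convergence of the $\gamma_n$ to strong convergence of their derivatives. Applying Lemma \ref{conv} (with $K$ a large closed ball containing the images of all $\gamma_n$, whose existence is ensured by uniform convergence and boundedness of $|\gamma_n'|$), from any subsequence we can extract a further subsequence, still denoted $\{\gamma_n\}$, such that $\gamma_n'\to\gamma'$ in $L^1([0,1];\R^d)$. Hence $|\gamma_n'|\to|\gamma'|$ in $L^1$, which yields
\begin{equation*}
L(\gamma_n)=\int_0^1|\gamma_n'(t)|\,dt\ \longrightarrow\ \int_0^1|\gamma'(t)|\,dt=L(\gamma).
\end{equation*}

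To conclude for the full sequence, I would run the standard subsequence argument: if $L(\gamma_n)\not\to L(\gamma)$, then some subsequence $\{\gamma_{n_k}\}$ would satisfy $|L(\gamma_{n_k})-L(\gamma)|\geq\delta$ for some $\delta>0$. But $\{\gamma_{n_k}\}$ also satisfies the hypotheses of the lemma (uniform convergence to $\gamma$, uniform bounds on speed and on $\int_0^1\kappa^2 ds$), so the previous step applied to $\{\gamma_{n_k}\}$ produces a further subsequence whose lengths converge to $L(\gamma)$, contradicting the separation by $\delta$.

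The main obstacle is really just making sure Lemma \ref{conv} is genuinely applicable to deliver the strong $L^1$ convergence of derivatives, i.e.\ that the uniform bounds translate into a uniform bound on $\|\gamma_n'\|_{TV}$. This follows because for constant speed parameterizations one has $|\gamma_n''|=|\gamma_n'|^2|\kappa_{\gamma_n}|$ a.e.\ on $[0,1]$, so by Cauchy--Schwarz
\begin{equation*}
\|\gamma_n'\|_{TV([0,1])}\ =\ \int_0^1|\gamma_n''|\,dt\ \leq\ |\gamma_n'|^2\left(\int_0^1\kappa_{\gamma_n}^2\,dt\right)^{1/2},
\end{equation*}
which is uniformly bounded by \eqref{finiteness}. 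Once this point is secured, the rest is essentially bookkeeping.
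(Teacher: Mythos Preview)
Your argument is correct, and in fact a bit cleaner than the route taken in the paper. Let me point out the main difference and one small omission.

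The paper splits into two cases. When the limit $\gamma$ is a singleton, it gives a direct geometric argument: if $L(\gamma_n)$ did not tend to zero, one could find, inside a ball of radius $\varepsilon_n \to 0$, a short arc on which the unit tangent turns by a definite angle, forcing $\int \kappa_{\gamma_n}^2$ to blow up. When $\gamma$ is not a singleton, lower semicontinuity of $\mathcal H^1$ gives $\inf_n |\gamma_n'|>0$, and the paper then shows directly that $\{\gamma_n\}$ is bounded in $W^{2,2}([0,1];\R^d)$ and invokes the compact embedding $W^{2,2}\hookrightarrow W^{1,2}$ to obtain strong $W^{1,2}$ convergence of a subsequence, hence convergence of lengths. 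The full-sequence conclusion is then obtained by exactly the subsequence argument you wrote.

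Your approach bypasses the case distinction entirely by feeding everything into Lemma~\ref{conv}. Since $|\gamma_n''|=|\gamma_n'|^2|\kappa_{\gamma_n}|$ and $|\gamma_n'|$ is bounded above, the uniform TV bound on $\gamma_n'$ holds regardless of whether $\inf_n|\gamma_n'|$ is positive, so Lemma~\ref{conv} applies uniformly and delivers $\gamma_n'\to\gamma'$ in $L^1$ along subsequences. This is arguably the more economical path, since it exploits the compactness already packaged in Lemma~\ref{conv} rather than reproving a piece of it.

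The one point you leave implicit is the identification of the limit: Lemma~\ref{conv} produces \emph{some} limit curve, and you need to say that this curve coincides with the given $\gamma$ by uniqueness of uniform limits (since Lemma~\ref{conv} gives $C^\alpha$ convergence, in particular uniform convergence, to its limit). Once that is stated, the argument is complete.
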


Note that this is a much stronger result { than} the general lower semicontinuity of
length. In particular, due to the curvature penalization, it states
that any minimizing sequence $\{\gm_n\}$ (which surely satisfies \eqref{finiteness}
which admits a uniform limit
$\gm$, satisfies also $\{L(\gm_n)\}\to L(\gm)$. This will be crucial for the proof of Lemma
\ref{C closed}.

{  
For brevity, given two vectors $v_1,v_2$, the notation
$\angle v_1 v_2$ will denote the angle between $v_1$ and $v_2$. That is,
$$\angle v_1 v_2 \in [0,\pi],\qquad \angle v_1 v_2:=\arccos\frac{\langle v_1, v_2
	\rangle}{|v_1||v_2|},$$
where $\langle \cdot,\cdot\rangle$ denotes the standard Euclidean scalar product 
of $\R^d$.}

\begin{proof}
Let $\{\gm_n\}$ be a sequence satisfying \eqref{finiteness}. 
\medskip

\noindent Case $1^o$: $\ggm_\gm$ is a singleton. 
{  We show that 
	since $\{\gm_n\}\to \gm$ uniformly, the uniform bound on the integrated squared curvature gives
	$\{L(\gm_n)\}\to 0$.
	Without loss of generality, let $\ggm_\gm=\{0\}$, and let
	 $\{\vep_n\}\da 0$ be a sequence such that
	$\ggm_{\gm_n} \sse B(0,\vep_n)$. 
	Let $\~\gm_n$ be the arc-length reparameterizations of $\gm_n$.
	We first claim that:
\begin{itemize}
	\item for any $n$, and a.e. $t\le L(\gm_n)-3\sqrt{2}\vep_n$, 
	there exists $s\in [t,t+3\sqrt{2}\vep_n]$ such that
	\[  \langle\~\gm_n'(t), \~\gm_n'(s)\rangle \le \frac{1}{\sqrt{2}}. \]
\end{itemize} 
To show this claim, it suffices to note that if this were not the case,
i.e. $\langle\~\gm_n'(t), \~\gm_n'(s)\rangle \ge \frac{1}{\sqrt{2}}$ for all  
$s\in [t,t+3\sqrt{2}\vep_n]$, 
 then 
\[  \langle\~\gm_n(t+3\sqrt{2}\vep_n) -\~\gm_n(t),\gm_n'(t)  \rangle  \ge \int_t^{t+3\sqrt{2}\vep_n}
 \langle
\~\gm_n(s),\gm_n'(t)  \rangle\d s \ge 3\vep_n, \]
which contradicts the fact that $\ggm_{\gm_n} \sse B(0,\vep_n)$. The claim is thus proven.

If $L(\gm_n)> 3\sqrt{2}\vep_n$ were to hold, then there exist $s_n> t_n$ such that
$$s_n- t_n\le 3\sqrt{2}\vep_n,\qquad \langle\~\gm_n'(t_n), 
\~\gm_n'(s_n)\rangle \le \frac{1}{\sqrt{2}}.$$
 This in turn gives
\begin{align*}
 \int_0^{1} \kappa_{\gm_n}^2 \d s\ge \int_{t_n}^{s_{n}} |\~\gm_n''|^2\d s\ge 
 \frac{1}{s_{n}-t_n}\bigg|\int_{t_n}^{s_{n}} \~\gm_n''\d s\bigg| 
\ge \frac{ |\~\gm_n'(t_n)-\~\gm_n'(s_n)|^2 }{3\sqrt{2}\vep_n}
\ge \frac{|1-\frac{1}{\sqrt{2}}|^2}{3\sqrt{2}\vep_n},
\end{align*}
where the last inequality follows from
\[ |\~\gm_n'(t_n)-\~\gm_n'(s_n)| \ge \langle \~\gm_n'(t_n)-\~\gm_n'(s_n), \~\gm_n'(t_n) \rangle
= 1 -\underbrace{\langle \~\gm_n'(s_n), \~\gm_n'(t_n) \rangle}_{\le 1/\sqrt{2}}. \]
 This clearly contradicts \eqref{finiteness}. Thus the only possibility 
 is  $L(\gm_n)\le 3\sqrt{2}\vep_n$ for all sufficiently large $n$, which clearly
 implies $L(\gm_n)\to 0=L(\gm)$.
}

\medskip

\noindent Case $2^o$ $\ggm_\gm$ is not a singleton. Without loss of generality, 
	{ we can} assume
$\inf_n |\gm_n'|>0$. {  This because by assumption 
	we have $\gamma_n\to \gamma$ uniformly.
	As $\gamma_n$ are arc-length parameterized, $|\gamma_n'|$ is equal to its length
	$L(\gm_n)=\mathcal{H}^1(\gamma_n)$, with $\mathcal{H}^1$ denoting the Hausdorff-1 measure.
	By the lower semicontinuity of 
	$\mathcal{H}^1$ (Golab's theorem) we then have
	$\liminf_{n\to+\8} \mathcal{H}^1(\gamma_n)\ge \mathcal{H}^1(\gamma)>0$. }

Note:
\begin{enumerate}
\item  $\{\gm_n\}\to \gm$ uniformly, i.e. $\{\gm_n\}$
is bounded in $L^\8([0,1];\R^d)$;

\item since the speed of $\{\gm_n\}$ is equal to $|\gm_n'|$ a.e.,
hypothesis $\sup_{n}|\gm_n'|<+\8$ gives that $\{\gm_n\}$ is bounded in $W^{1,\8}([0,1];\R^d)$;

\item the curvature $\kappa_{\gm_n}$ describes the variation of the unit tangent derivative,
and absolute continuity of $\kappa_{\gm_n}$ with respect to the Lebesgue measure (which follows from
\eqref{finiteness}) gives
$$\kappa_{\gm_n}= |\gm_n'|^{-1}\frac\d{\d s}\Big( \frac{\gm_n'}{|\gm_n'|}\Big)=
|\gm_n'|^{-1}\frac{\gm_n''}{|\gm_n'|}.$$
Since $\inf_{n} |\gm_n'|>0$, \eqref{finiteness} implies that
 $\{\gm_n\}$ is bounded in $W^{2,2}([0,1];\R^d)$.
\end{enumerate}
Thus there exists $\~\gm\in W^{2,2}([0,1];\R^d)$ and a subsequence $\{\gm_{n_k}\}$ 
converging to $\~\gm$ in the weak topology of $W^{2,2}([0,1];\R^d)$. Since the embedding
$W^{2,2}([0,1];\R^d) \hookrightarrow W^{1,2}([0,1];\R^d)$ is compact,
$\{\gm_{n_k}\}$ converges to $\~\gm$ strongly in $W^{1,2}([0,1];\R^d)$,
hence $\{\gm_{n_k}\}$ converges to $\~\gm$ strongly in $L^2([0,1];\R^d)$.
Since by hypothesis $\{\gm_{n_k}\}$ converges to $\gm$ strongly in $L^\8([0,1];\R^d)$,
it follows $\~\gm=\gm$. Strong convergence of $\{\gm_{n_k}\}$ to $\gm$
in $W^{1,2}([0,1];\R^d)$ implies 
$$L(\gm_{n_k})=\int_0^1 |\gm_{n_k}'|\d s   \to \int_0^1 |\gm'|\d s = L(\gm).$$
{ 
To achieve convergence for the whole sequence, note that if there exists
a subsequence $\{\gm_{n_h}\}$ such that $\lim_{h\to+\8}  L(\gm_{n_h})\neq L(\gm)$,
then the above construction allows to further extract a sub-subsequence
 $\{\gm_{n_{h(j)}}\}$ converging to $\gm$ in the strong topology of $W^{1,2}([0,1];\R^d)$,
which in turn implies $\lim_{j\to+\8} L(\gm_{n_{h(j)}})= L(\gm)$.
Contradiction.
}
\end{proof}

The next result proves compactness of sub-levels for $\E_\mu^{\la,\p}$.
}

\begin{lem}\label{C closed}
Given $d\geq 2$, {  finite measure $\mu \geq 0$ compactly supported in $\R^d$},  parameters $\la,\vep>0$, $\p\geq 1$,
$M \geq \inf_\C \EE$, 
a sequence $\{\vph_n\}\sse \C\cap \{\EE\leq M\}$, there
exists $\vph_\8\in \C$ such that (upon subsequence, which will not be relabeled) $\{\vph_n\}\rca \vph_\8$.
\end{lem}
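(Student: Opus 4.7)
The plan is to extract uniform estimates from the energy bound $\EE(\vph_n)\le M$, pass to a subsequential limit via the constant-speed reparameterizations, and then identify this limit as the image of some $\vph_\infty\in\C$ with $L(\vph_n)\to L(\vph_\infty)$.

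First, from $\EE(\vph_n)\le M$, I get three essentially immediate bounds: $L(\vph_n)\le M/\la$, $\int_0^{L(\vph_n)}\kappa_{\vph_n}^2\,\d\L^1\le M/\vep$, and $\int d(x,\ggm_{\vph_n})^\p\d\mu\le M$. The last bound, combined with compactness of $\supp(\mu)$ and positivity of $|\mu|$, forces the existence of points on $\ggm_{\vph_n}$ within a uniformly bounded distance of $\supp(\mu)$; together with the length bound this confines all $\ggm_{\vph_n}$ to a common compact set $K\ss\R^d$.

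Next, I pass to the constant-speed reparameterizations $\vph_n^*=\Phi(\vph_n):[0,1]\to K$, each with constant speed $a_n=L(\vph_n)$. A direct computation using Cauchy--Schwarz shows $\|(\vph_n^*)'\|_{TV}\le a_n\int_0^{a_n}|\kappa_{\vph_n}|\d\L^1\le a_n^{3/2}\sqrt{M/\vep}$, which is uniformly bounded. This is exactly the hypothesis required to apply Lemma~\ref{conv}, so along a subsequence $\vph_n^*\to\gm^*$ in $C^\al$ for every $\al<1$, and $(\vph_n^*)'\to(\gm^*)'$ in $L^p$. Moreover, up to a further subsequence I may assume $a_n\to a\in[0,M/\la]$. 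Since $|(\vph_n^*)'|\equiv a_n$, the $L^2$ convergence of derivatives yields $|(\gm^*)'|\equiv a$ a.e., so $\gm^*\in\C^*$. Define $\vph_\infty\in\C$ as the arc-length reparameterization of $\gm^*$, with domain $[0,L(\vph_\infty)]$ where $L(\vph_\infty)=a$ (so $\vph_\infty$ reduces to a single point if $a=0$).

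The remaining task is to establish convergence in the metric $m$ of \eqref{def:metC}, which requires $L(\vph_n)\to L(\vph_\infty)$. The naive bound from $L^1$ convergence of $|(\vph_n^*)'|$ only gives $a_n\to a$ if one already has $L^1$ convergence of the speeds, which in the present setting I extract directly from Lemma~\ref{length C0}: the sequence $\{\vph_n^*\}$ converges uniformly to $\gm^*=\Phi(\vph_\infty)$, both bounds in \eqref{finiteness} hold (by the uniform curvature estimate and by $\sup_n|(\vph_n^*)'|=\sup_n a_n\le M/\la$), hence $L(\vph_n^*)=a_n\to L(\gm^*)=a=L(\vph_\infty)$. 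Combined with the uniform convergence $\Phi(\vph_n)\to\Phi(\vph_\infty)$, this is exactly condition (i)--(ii) in the definition of $\rca$, so $\vph_n\rca\vph_\infty$ (selecting the branch in $m$ corresponding to the natural orientation suffices, since the minimum in \eqref{def:metC} only ever helps).

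The main obstacle is the degenerate case $a_n\to 0$: a priori the curvature bound $\int_0^1|\kappa_{\vph_n^*}|^2\d s$ involves a factor of $1/a_n$ and blows up, so one cannot simply argue via weak $W^{2,2}$ compactness of the reparameterizations. This is precisely why I phrase the compactness through the $TV$ bound on $(\vph_n^*)'$ (which remains uniform for arbitrarily small $a_n$) and invoke Case $1^o$ of Lemma~\ref{length C0} to handle length convergence when $\ggm_{\gm^*}$ is a singleton. Apart from this point, the argument is a standard Ascoli--Arzel\`a plus lower-semicontinuity scheme.
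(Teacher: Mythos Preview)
Your argument is correct, but it differs from the paper's and contains one redundant step worth flagging.

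The paper proceeds by a case split on $\liminf_n L(\vph_n)$: if this is zero, it passes to a subsequence of shrinking curves collapsing to a point; if positive, it uses the lower bound on $L(\vph_n)$ to obtain a uniform $W^{2,2}([0,1];\R^d)$ bound on $\vph_n^*$ (since $\int_0^1|\kappa_{\vph_n^*}|^2\,\d\L^1 = L(\vph_n)^{-1}\int_0^{L(\vph_n)}|\kappa_{\vph_n}|^2\,\d\L^1$), and then invokes the compact embedding $W^{2,2}\hookrightarrow W^{1,2}$ directly. Your route through Lemma~\ref{conv} via the $TV$ bound $\|(\vph_n^*)'\|_{TV}\le a_n^{3/2}\sqrt{M/\vep}$ is a genuine alternative: because this bound is uniform even as $a_n\to 0$, you avoid the case split entirely, which is a modest gain in economy.

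However, your appeal to Lemma~\ref{length C0} is unnecessary, and the paragraph beginning ``The remaining task\dots'' is addressing a non-issue. You have already (i) passed to a subsequence with $a_n\to a$ and (ii) shown $|(\gm^*)'|\equiv a$ a.e., whence $L(\vph_\infty)=L(\gm^*)=a$. Thus $L(\vph_n)=a_n\to a=L(\vph_\infty)$ is immediate. Indeed, the $L^p$ convergence of derivatives from Lemma~\ref{conv} already forces $a_n=\|(\vph_n^*)'\|_{L^1}\to\|(\gm^*)'\|_{L^1}$, so even the preliminary extraction of a subsequence with $a_n\to a$ is not strictly needed. The ``main obstacle'' you describe---the blow-up of $\int_0^1|\kappa_{\vph_n^*}|^2\,\d\L^1$ as $a_n\to 0$---is precisely what the paper's case split handles; your $TV$-based argument sidesteps it completely, and no separate treatment via Case~$1^o$ of Lemma~\ref{length C0} is required.
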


\begin{proof}
Consider an arbitrary sequence $\{\vph_n\}\sse\C\cap \{\EE\leq M\}$.
If there exists a subsequence $\{n_k\}$ such that
$\{L(\vph_{n_k})\}\ra 0$, then $\{\ggm_{\vph_{n_k}}\}\rdh \{z\}$ for some $z\in \R^d$,
and letting $\vph_\8: { \{0\}}\lra \{z\}$ concludes the proof. Now assume
\begin{equation}\label{assum cpt}
 \liminf_{n\to+\8} \nc L(\vph_n)>0.
\end{equation}
 Let $\vph_n^*:=\Phi(\vph_n)$, $n\in \mathbb{N}$. {  We show that $\{\vph_n^*\}$ is bounded in 
 $W^{2,2}([0,1];\R^d)$.}

\medskip

Claim 1. $\sup_n  L(\vph_n^*)<+\8.$

Consider an arbitrary index $n$. Since
$\EE(\vph_n)\leq M$,
 it follows
 \begin{equation*}
 \la L(\vph_n^*)= \la L(\vph_n) \leq \EE(\vph_n) \leq M   \Lra L(\vph_n^*)\leq M/\la.
 \end{equation*}
 
 \medskip

 Claim 2. { ${\disp{\sup_n \int_0^{1} \left| \kappa_{\vph_n^*} \right|^2 \d\L^1<+\8}}$}.

Consider an arbitrary index $n$. Note that, for any $\vph\in \C\cap \{\EE\leq M\}$ 
{  with $L(\vph)>0$}, it holds
 \begin{align}
\int_0^1 |\kappa_{\vph^*}|^2 \d\L^1 & = \int_0^1 \left| \frac{\d}{\d s} 
\left( \frac{\vph'\big(sL(\vph)\big)}{|(\vph^*)'|} \right) \right|^2 \d s 
 =  \int_0^1 \left| \vph''\big(sL(\vph)\big)  \right|^2 \d s \notag \\
 &
\overset{s':=sL(\vph)}{ =} \frac{1 }{L({\vph}) } \int_0^{L(\vph)} \left| \vph''(s') \right|^2 \d s'
= \frac{1 }{L({\vph}) } \int_0^{L(\vph)} \left| \kappa_\vph \right|^2 \d\L^1 ,\label{note}
\end{align}
where $\vph^*:=\Phi(\vph)$.
Condition $\EE(\vph_n) \leq M$ then gives
 \begin{equation*}
 \vep L({\vph_n}) \int_0^1 |\kappa_{\vph_n^*}|^2 \d\L^1 = \vep 
  \int_0^{L(\vph_n)} \left| \kappa_\vph \right|^2 \d\L^1 \leq
 \EE(\vph_n) \leq M. 
 \end{equation*}

\medskip

 Claim 3. $\bigcup_n \ggm_{\vph_n} \sse K$ for some compact set $K$.

Consider an arbitrary index $n$. Given $r$, if $\ggm_{\vph_n}\cap \big(\!\supp(\mu)\big)_r =\emptyset$,
where 
$$\big(\!\supp(\mu)\big)_r:=\left\{x\in \R^d: d\big(x,\supp(\mu)\big)\leq r\right\},$$ 
then
$$\int_{\R^d} d(x,\ggm_{\vph_n})^\p\d\mu \geq r^\p.$$
Claim 1. proves $\sup_n L(\vph_n)\leq M/\la$, thus if 
$\ggm_{\vph_n}\cap \left(\R^d\backslash \big(\!\supp(\mu)\big)_{ (2M)^{1/\p}+ 
M/\la} \right) $ is non empty, then
$$\ggm_{\vph_n}\cap \big( \supp(\mu)\big)_{(1.5M)^{1/\p}}=\emptyset,$$  hence
$$M\geq \EE(\vph_n)\geq \int_{\R^d} d(x,\ggm_{\vph_n})^\p\d\mu \geq 1.5M. $$
Contradiction. Letting $K:=\big(\!\supp(\mu)\big)_{(2M)^{1/\p}+ 
M/\la}$
proves the claim.

\medskip

{  Claims 1., 2. and 3. prove that $\{\vph_n^*\}$ is bounded in $W^{2,2}([0,1];\R^d)$,
hence (upon subsequence) it converges to $\vph_\8^*$ in the strong topology of $W^{1,2}([0,1];\R^d)$.
{  Therefore, denoting by $\vph_\8$ the arc-length reparameterization of $\vph^*$,
we get $\{\vph_n\}\overset{\C}{\to} \vph_\8$, concluding the proof.}
}
\end{proof}

Now we can prove existence of minimizers.
\begin{thm}\label{ext}
 For $d\geq 2$, given {  finite measure $\mu \geq 0$ compactly supported in $\R^d$},  and parameters $\la,\vep>0$,
 $\p\geq 1$,
 the functional $\EE$ admits a minimizer in $\C$.
\end{thm}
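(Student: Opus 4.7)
The plan is to apply the direct method of the calculus of variations, leveraging the compactness result in Lemma \ref{C closed} together with lower semicontinuity of each summand of $\EE$ along the convergence $\rca$. By \eqref{EE finite} we have $\inf_\C \EE < +\8$, so I would fix a minimizing sequence $\{\vph_n\} \sse \C$ with $\EE(\vph_n) \to \inf_\C \EE$; without loss of generality $\EE(\vph_n) \le M$ for some finite $M$. Lemma \ref{C closed} then produces a subsequence (not relabeled) and a limit $\vph_\8 \in \C$ with $\vph_n \rca \vph_\8$. The remaining task is to show $\EE(\vph_\8) \leq \liminf_n \EE(\vph_n)$.

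For the length term, convergence $L(\vph_n) \to L(\vph_\8)$ is built into the definition of $\rca$. For the data-fitting term, uniform convergence of the (properly oriented) constant-speed reparameterizations $\zeta_n \to \Phi(\vph_\8)$ together with length convergence implies Hausdorff convergence $\ggm_{\vph_n} \rdh \ggm_{\vph_\8}$. Since the map $\Sigma \mapsto d(x,\Sigma)^\p$ is continuous in the Hausdorff metric and uniformly bounded by $\bigl(\diam(\supp \mu) + \sup_n \diam(\ggm_{\vph_n})\bigr)^\p$ on $\supp(\mu)$, which is compact, dominated convergence yields $\int d(x,\ggm_{\vph_n})^\p \d\mu \to \int d(x,\ggm_{\vph_\8})^\p \d\mu$. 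For the curvature term, the proof of Lemma \ref{C closed} shows $\{\vph_n^* = \Phi(\vph_n)\}$ is bounded in $W^{2,2}([0,1];\R^d)$, so up to a further subsequence $\vph_n^* \wra \vph_\8^*$ weakly in $W^{2,2}$. Weak lower semicontinuity of the $L^2$ norm gives $\int_0^1 |\kappa_{\vph_\8^*}|^2 \d\L^1 \leq \liminf_n \int_0^1 |\kappa_{\vph_n^*}|^2 \d\L^1$, which combined with identity \eqref{note} and length convergence transfers to the arc-length parameterization: $\int_0^{L(\vph_\8)} |\kappa_{\vph_\8}|^2 \d\L^1 \leq \liminf_n \int_0^{L(\vph_n)} |\kappa_{\vph_n}|^2 \d\L^1$. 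Summing the three inequalities concludes that $\vph_\8$ minimizes $\EE$.

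The main subtlety, which the forward step requires handling, is the degenerate case $L(\vph_\8)=0$: the rescaling identity \eqref{note} loses meaning, and the $W^{2,2}$ bound on $\{\vph_n^*\}$ can degenerate. This case is already isolated at the start of the proof of Lemma \ref{C closed}: if a subsequence has $L(\vph_{n_k}) \to 0$, then Lemma \ref{length C0} (applied in reverse to identify the limit as a singleton) forces $\vph_\8$ to be a constant curve, for which $L(\vph_\8) = 0$ and the curvature integral vanishes by the convention that it is defined on $[0,L(\vph_\8)] = \{0\}$. In this degenerate situation the distance term passes to the limit by Hausdorff convergence of $\ggm_{\vph_{n_k}}$ to the singleton, and lower semicontinuity is trivial. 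The nondegenerate case, where $\liminf_n L(\vph_n)>0$, is the one requiring the $W^{2,2}$ compactness argument sketched above, and it is the main technical content of the proof.
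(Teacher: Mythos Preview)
Your proposal is correct and follows essentially the same route as the paper's proof: direct method via Lemma \ref{C closed} for compactness, continuity of the distance and length terms along $\rca$, and weak $L^2$ lower semicontinuity for the curvature term combined with identity \eqref{note} and length convergence. The only cosmetic difference is that the paper identifies the weak limit of the curvature densities by invoking Lemma \ref{conv}, whereas you argue directly via weak $W^{2,2}$ compactness of $\{\vph_n^*\}$; your explicit treatment of the degenerate case $L(\vph_\8)=0$ is a nice addition that the paper leaves implicit.
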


\begin{proof}
Consider an arbitrary minimizing sequence $\{\vph_n\}\sse \C$.
In view of \eqref{EE finite}, assume 
$$\EE(\vph_n)\leq \big(\!\D\big)^\p+\xi \qquad\text{for some } {  0<}\xi\ll1.$$
Lemma \ref{C closed} gives the existence of a limit curve $\vph_\8$.
Convergence $\{\vph_n\}\rca \vph_\8$ gives 
\begin{equation}\label{conv conq}
\lim_{n\to+\8} \int_{\R^d} d(x,\ggm_{\vph_n})^\p\d\mu =  
\int_{\R^d} d(x,\ggm_{\vph_\8})^\p\d\mu,\qquad \lim_{n\to+\8}L(\vph_n)=L(\vph_\8).
\end{equation}
It remains to prove
\begin{equation}\label{goal}
\liminf_{n\to+\8} \int_0^{L(\vph_n)} |\kappa_{\vph_n}|^2\d\L^1 \geq 
\int_0^{L(\vph_\8)} |\kappa_{\vph_\8}|^2\d\L^1. 
\end{equation}
Let $\vph_n^*:=\Phi(\vph_n)$, $n\in \mathbb{N}$, and
$\vph_\8^*:=\Phi(\vph_\8)$.
Note that
$$\sup_n \int_0^{L(\vph_n)} |\kappa_{\vph_n}|^2\d\L^1 
\leq \sup_n \frac{\EE(\vph_n)}\vep \leq \frac{\big(\!\D\big)^\p+\xi}\vep<+\8,$$
thus, in view of \eqref{note},
$\sup_n \int_0^{L(\vph_n)} |\kappa_{\vph_n^*}|^2\d\L^1 <+\8$,
and
 the sequence $\{\d\kappa_{\vph_n}/\d\L^1\}$
is bounded in $L^2([0,1])$. Therefore
(upon subsequence, which will not be relabeled) there exist
$g\in L^2([0,1])$ such that
$\{\d\kappa_{\vph_n^*}/\d\L^1\}\rightharpoonup g$, hence 
\begin{equation}\label{curve lsc}
\left\{ \frac{\d\kappa_{\vph_n^*}}{\d\L^1}\cdot \L^1_{\llcorner [0,1]} \right\}
\rightharpoonup g\cdot \L^1_{\llcorner [0,1]} \quad { \text{ weakly in }
L^2([0,1])}, 
\qquad
\|g\|_{L^2([0,1])}\leq \liminf_{n\to+\8} \left\| \frac{\d\kappa_{\vph_n^*}}{\d\L^1} \right\|_{L^2([0,1])}.
\end{equation}
Lemma \ref{conv} gives (upon subsequence, which will not be relabeled)
 $\{\kappa_{\vph_n^*}\}\rhu \kappa_{\vph_\8^*}$, 
hence 
$\kappa_{\vph_\8^*}=g\cdot \L^1_{\llcorner [0,1]}$. Combining
\eqref{curve lsc}, observation \eqref{note}, and $\{L(\vph_n)\}\ra L(\vph_\8)$
proves \eqref{goal}. Combining \eqref{conv conq} and \eqref{goal}
gives
$\EE(\vph_\8) \leq \liminf_{n\to+\8} \EE(\vph_n)$, concluding the proof. 
\end{proof}

\begin{lem}\label{comp}
Given $d\geq 2$, {  a finite measure $\mu \geq 0$ compactly supported in $\R^d$},  parameters $\la,\vep>0$, 
$\p\geq 1$ and
a minimizer $\vph\in \argmin\ \E_\mu^{\la,\vep}$, it holds:
\begin{listi}
\item length estimate: 
\begin{equation}\label{e1}
L(\vph)\leq \frac{\big(\!\D\big)^\p}{\la},
\end{equation}

\item curvature term estimate:
\begin{equation}\label{e2}
\int_0^{L(\vph)} |\kappa_\vph|^2\d\L^1 \leq \frac{\big(\!\diam \supp(\mu)\big)^\p}{\vep},
\end{equation}

\item confinement condition: $\ggm_\vph \sse \big(\!\supp(\mu)\big)_{\D+
(\D)^{\p}/\la}$,
where for given $r\geq 0$, 
$$\big(\!\supp(\mu)\big)_r:=\{x\in \R^d:d\big(x,\supp(\mu)\big)\leq r\}.$$
\end{listi}
\end{lem}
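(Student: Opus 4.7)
The unifying idea for all three parts is to test the minimality of $\vph$ against the trivial competitor $\psi:\{0\}\to\{z\}$ for some fixed $z\in\supp(\mu)$; this competitor has zero length, zero integrated squared curvature, and its approximation term equals $\int|x-z|^p\,\d\mu$, which by the diameter bound is at most $(\D)^p$ (as already recorded in (EE finite)). Thus for any minimizer we have the master inequality $\EE(\vph)\le(\D)^p$, which I will exploit in each item.

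For (i), I would just discard the nonnegative approximation and curvature terms in $\EE(\vph)$ to get $\la L(\vph)\le \EE(\vph)\le(\D)^p$, and divide by $\la$. For (ii), by the same token $\vep\int_0^{L(\vph)}|\kappa_\vph|^2\,\d\L^1\le \EE(\vph)\le(\D)^p$, and dividing by $\vep$ yields (e2). Both are one-line bounds, the only subtlety being that the curvature is well-defined by Problem~\ref{main}, which forces $\vph\in H^2\cap\C$ whenever $\EE(\vph)<+\infty$.

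For (iii), I would argue by contradiction. Suppose there exists $y\in\ggm_\vph$ with $d(y,\supp(\mu))>R:=\D+(\D)^p/\la$. Since $\vph$ is arc-length parameterized on $[0,L(\vph)]$, for any $z\in\ggm_\vph$ one has $|z-y|\le L(\vph)$, and by part (i) $L(\vph)\le(\D)^p/\la$. Hence, by the triangle inequality,
\begin{equation*}
d(z,\supp(\mu))\ge d(y,\supp(\mu))-|z-y|>R-(\D)^p/\la=\D
\end{equation*}
for every $z\in\ggm_\vph$. Because $\ggm_\vph$ is compact and the inequality is strict, the infimum $\alpha:=\inf_{z\in\ggm_\vph}d(z,\supp(\mu))$ is attained and satisfies $\alpha>\D$. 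Consequently, for every $x\in\supp(\mu)$, $d(x,\ggm_\vph)\ge\alpha>\D$, so that
\begin{equation*}
\int_{\R^d} d(x,\ggm_\vph)^p\,\d\mu\;>\;(\D)^p,
\end{equation*}
which (comparing with the trivial competitor and the upper bound from (EE finite)) contradicts $\EE(\vph)\le(\D)^p$.

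The only step requiring any care is (iii), where I need the strict inequality to survive through the integration; this is handled by the compactness of $\ggm_\vph$ together with the convention that the approximation term of the trivial competitor at $z\in\supp(\mu)$ is bounded above by $(\D)^p$ exactly as in (EE finite). The rest is bookkeeping with the energy inequality, so I do not anticipate a substantive obstacle.
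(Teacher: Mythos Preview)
Your proof is correct and follows essentially the same approach as the paper: both compare the minimizer with the trivial singleton competitor to obtain $\EE(\vph)\le(\D)^p$ (this is exactly \eqref{EE finite}), then read off (i) and (ii) by dropping nonnegative terms. For (iii) the paper argues in the positive direction---it first observes that $\ggm_\vph\cap(\supp(\mu))_{\D}\neq\emptyset$ (else the approximation term alone would exceed $(\D)^p$) and then uses the length bound \eqref{e1} to conclude confinement---whereas you run the logically equivalent contrapositive; your extra care with the strict inequality via compactness of $\ggm_\vph$ is fine but not strictly needed, since the paper is content with the non-strict version.
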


\begin{proof}
Estimates \eqref{e1} follows from 
$$\la L(\vph)\leq \EE(\vph) = \inf_\C \EE \overset{\eqref{EE finite}}{\leq}
\big(\!\diam \supp(\mu)\big)^\p, $$ 
while \eqref{e2} follows from
$$\vep \int_0^{L(\vph)} |\kappa_\vph|^2
\d\L^1\leq \EE(\vph) = \inf_\C \EE \overset{\eqref{EE finite}}{\leq}
\big(\!\diam \supp(\mu)\big)^\p. $$
To prove the confinement condition, note
that for any $\psi\in \C$, if $\ggm_\psi\cap \big(\!\supp(\mu)\big)_r =\emptyset$,
then it follows $\int_{\R^d} d(x,\ggm_\psi)^\p\d\mu\geq r^\p$. Inequality
\eqref{EE finite} gives $\ggm_\psi\cap \big(\!\supp(\mu)\big)_{\D} \neq\emptyset$ .
Combining with length estimate \eqref{e1} concludes the proof.
\end{proof}

\medskip

The next result proves a connection between Problems \ref{main} and \ref{main1}.
Let
$E_\mu^{\la,\p}:\C\lra [0,+\8)$ be defined by 
$$   E_\mu^{\la,\p}(\psi):=\int_{\R^d} d(x,\ggm_\psi)^\p\d\mu+\la L(\psi).$$
\begin{lem}\label{gamma}
Given $d\geq 2$, {  a finite measure $\mu \geq 0$ compactly supported in $\R^d$},  parameters $\la>0$,
$\p\geq 1$,
and a sequence $\{\vep_n\}\ra 0$, then
 $\{\E_{\mu}^{\la,\vep_n,\p}\}$ {  $\ggm$-converges to} $ E_\mu^{\la,\p}$ { in the topology of $\mathcal{C}$}
 as $n\ra +\8$.
\end{lem}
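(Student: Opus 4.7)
The plan is to verify the two defining conditions of $\ggm$-convergence in the metric space $(\C,m)$: the $\liminf$-inequality, and existence of a recovery sequence. Since the curvature term is non-negative, the liminf side should follow essentially from continuity of $E_\mu^{\la,\p}$ along $\rca$-convergent sequences, while the $\limsup$ side requires a smoothing construction combined with a diagonal argument against the given sequence $\{\vep_n\}\da 0$.

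For the $\liminf$-inequality, fix $\{\vph_n\}\rca \vph$. Because
\[ \E_\mu^{\la,\vep_n,\p}(\vph_n) \;=\; E_\mu^{\la,\p}(\vph_n) + \vep_n\!\int_0^{L(\vph_n)}|\kappa_{\vph_n}|^2\d\L^1 \;\ge\; E_\mu^{\la,\p}(\vph_n), \]
it suffices to show $E_\mu^{\la,\p}(\vph_n)\to E_\mu^{\la,\p}(\vph)$. Convergence of lengths is built into $\rca$. Uniform convergence of the constant-speed reparameterizations $\zeta_n\to \Phi(\vph)$ forces Hausdorff convergence of the images $\ggm_{\vph_n}\to \ggm_\vph$; consequently $x\mapsto d(x,\ggm_{\vph_n})$ converges uniformly on the compact set $\supp(\mu)$ to $d(x,\ggm_\vph)$. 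Since all these integrands are uniformly bounded by a fixed constant depending only on $\D$, dominated convergence gives the desired convergence of the distance integral.

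For the recovery sequence, given $\vph\in\C$, first dispose of the case $L(\vph)=0$ by taking $\vph_n\equiv \vph$. Otherwise, set $\vph^*=\Phi(\vph)$ and mollify: for each $\eta>0$, extend $\vph^*$ as a constant just outside $[0,1]$ and let $\vph^*_\eta:=\vph^*\ast\rho_\eta$ with $\rho_\eta$ a standard mollifier. Then $\vph^*_\eta\in C^\8$ with $\vph^*_\eta\to\vph^*$ uniformly, $|(\vph^*_\eta)'|\le \rho_\eta\ast|(\vph^*)'|$ so $\limsup_{\eta\da 0} L(\vph^*_\eta)\le L(\vph)$, and Golab's lower semicontinuity gives the matching $\liminf$, hence $L(\vph^*_\eta)\to L(\vph)$. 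Let $\vph^{(\eta)}$ denote the arc-length reparameterization of $\vph^*_\eta$; since $|(\vph^*_\eta)'|$ converges in $L^1$ to the positive constant $L(\vph)$, the reparameterization map $T_\eta:[0,1]\to[0,1]$ converges uniformly to the identity, so $\Phi(\vph^{(\eta)}) =\vph^*_\eta\circ T_\eta$ converges uniformly to $\vph^*$. Combined with the length convergence, this yields $\vph^{(\eta)}\rca \vph$ as $\eta\da 0$. Because $\vph^{(\eta)}$ is smooth on a compact interval,
\[ C_\eta := \int_0^{L(\vph^{(\eta)})}|\kappa_{\vph^{(\eta)}}|^2\d\L^1 <+\8 \]
for each fixed $\eta$. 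Choosing $\eta_n\da 0$ slowly enough that $\vep_n C_{\eta_n}\to 0$ (possible since $\{\vep_n\}\to 0$), the sequence $\vph_n:=\vph^{(\eta_n)}$ satisfies $\vph_n\rca\vph$, $E_\mu^{\la,\p}(\vph_n)\to E_\mu^{\la,\p}(\vph)$ by the continuity established above, and the curvature contribution vanishes, giving $\limsup_n \E_\mu^{\la,\vep_n,\p}(\vph_n)\le E_\mu^{\la,\p}(\vph)$.

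The main obstacle is the recovery-sequence construction: one must certify that mollifying the constant-speed representative and then returning to arc-length parameterization indeed yields convergence in the metric $m$, while producing an $\eta$-dependent but finite curvature bound $C_\eta$ that can be defeated by a diagonal choice. Once that regularization and diagonal extraction are in hand, both $\ggm$-convergence inequalities follow from the continuity input already proved for the liminf side.
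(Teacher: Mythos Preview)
Your overall strategy---drop the nonnegative curvature term for the $\liminf$ and use smooth approximation plus a diagonal extraction for the $\limsup$---is exactly what the paper does. The $\liminf$ half is correct as written.

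The $\limsup$ construction, however, has a genuine gap. You assert that the arc-length reparameterization $\vph^{(\eta)}$ of the mollified curve $\vph^*_\eta$ is smooth, hence $C_\eta<+\8$. This fails whenever the mollified speed $|(\vph^*_\eta)'|$ vanishes, and that can happen for perfectly legitimate $\vph\in\C$. Take $d=2$ and $\vph:[0,2]\to\R^2$ tracing the segment from $(0,0)$ to $(1,0)$ and back; then $(\vph^*)'$ equals $(2,0)$ on $(0,\tfrac12)$ and $(-2,0)$ on $(\tfrac12,1)$, so with a symmetric mollifier $(\vph^*_\eta)'(\tfrac12)=0$. The mollified curve still lives on the $x$-axis, and its arc-length reparameterization has a tangent that jumps from $(1,0)$ to $(-1,0)$ at the turnaround: $\vph^{(\eta)}\notin C^1$, hence $\vph^{(\eta)}\notin H^2$ and $C_\eta=+\8$ for every $\eta$. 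More generally, whenever the unit tangent of $\vph$ oscillates enough on the scale $\eta$, the averaged tangent can vanish and the same obstruction appears. So the sentence ``because $\vph^{(\eta)}$ is smooth on a compact interval'' is the step that breaks.

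The paper sidesteps this by simply positing a sequence of smooth curves $\tilde\vph_k\rca\vph$ and running the diagonal argument; it does not commit to mollification. If you want an explicit construction, one robust route is: approximate $\vph^*$ first by piecewise-linear curves whose consecutive segments are non-collinear (so no reversals), and then round the corners by small circular arcs; this produces $C^{1,1}$ curves in $\C$ with finite curvature energy that converge in $m$. Alternatively, replace each reversal by a short smooth turnaround loop of radius $r\da 0$ before mollifying.

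A minor point: invoking Golab's theorem for the $\liminf$ of $L(\vph^*_\eta)$ is not the right tool---Golab controls the $\H^1$ measure of the image, which in the back-and-forth example is $1$, not the parametric length $2$. Lower semicontinuity of total variation under uniform convergence is what you need there, and it gives the conclusion you want.
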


\begin{proof}
The proof of $\ggm$-convergence is be split into two steps.
\medskip

Step 1. $\ggm-\liminf$ inequality: for any sequence $\{\vph_n\}\rca \vph$
it holds $$\liminf_{n\to+\8}\E_{\mu}^{\la,\vep_n,\p}(\vph_n)\geq E_\mu^{\la,\p}(\vph).$$
Since $\{\vph_n\}\rca \vph$,
it follows $\{\ggm_{\vph_n}\}\rdh \ggm_\vph$ and $\{L(\vph_n)\}\ra L(\vph)$. As proven in \cite{BOS},
the functional
$$ \A\ni \sm \mapsto \int_{\R^d} d(x,\sm)^\p \d\mu $$
is continuous with respect to Hausdorff distance, hence
$$\lim_{n\to+\8}  \int_{\R^d} d(x,\ggm_{\vph_n})^\p\d\mu= \int_{\R^d} d(x,\ggm_{\vph})^\p\d\mu.$$
Thus we have
\begin{align*}
\liminf_{n\to+\8} \E_\mu^{\la,\vep_n,\p}(\vph_n) & \geq \liminf_{n\to+\8} \int_{\R^d} d(x,\ggm_{\vph_n})^\p\d\mu +\la L(\vph_n) \\
& = \int_{\R^d} d(x,\ggm_{\vph})^\p\d\mu+\la L(\vph)\\
& = E_\mu^{\la,p}(\vph).
\end{align*}

\medskip

Step 2.  $\ggm-\limsup$ inequality: for any $\vph$, there exists a sequence $\{\vph_n\}\rca \vph$
such that 
$${ \limsup_{n\to+\8}}\ \E_{\mu}^{\la,\vep_n,\p}(\vph_n)\leq E_\mu^{\la,\p}(\vph).$$

{  Let $\{ \tilde \vph_k\}$ be a sequence of smooth functions such that $ \{\tilde \vph_k\}\to \vph$ uniformly.  By relabeling the sequence, using $k(n) = \max \left\{1, \sup \left\{ k \::\: 
  \int_0^{L(\tilde \vph_k)} |\kappa_{\tilde \vph_k}|^2 \d\L^1 \leq \vep_n^{-1/2} \right\} \right\}$, and defining
  $\vph_n = \tilde \vph_{k(n)}$ we have that for all $n$ large enough
$$
\int_0^{L(\vph_n)} |\kappa_{\vph_n}|^2 \d\L^1 \leq \vep_n^{-1/2},\quad \te{and } \;\;
\{\vph_n\}\to \vph \quad \text{uniformly}. $$
}
Thus 
$$\lim_{n\to+\8}\int_{\R^d} d(x,\ggm_{\vph_n})^\p\d\mu = \int_{\R^d} d(x,\ggm_{\vph})^\p\d\mu,
\qquad \lim_{n\to+\8} \vep_n \int_0^{L(\vph_n)} |\kappa_{\vph_n}|^2 \d\L^1 =0,$$
{ which gives}
\begin{align*}
\lim_{n\to+\8} \E_\mu^{\la,\vep_n,\p}(\vph_n) & = \lim_{n\to+\8} \int_{\R^d} d(x,\ggm_{\vph_n})^\p\d\mu 
+\la L(\vph_n) +\vep_n \int_0^{L(\vph_n)} |\kappa_{\vph_n}|^2 \d\L^1\\
& = \int_{\R^d} d(x,\ggm_{\vph})^\p\d\mu+\la L(\vph)\\
& = E_\mu^{\la,\p}(\vph),
\end{align*}
concluding the proof.
\end{proof}

Note that the upper bound in \eqref{e2} diverges as $\vep\ra 0$: this is indeed necessary,
in view of Lemmas \ref{gamma}, and \ref{non reg} below:

\begin{lem} \label{non reg}
(\cite{Slepcev})
There exist $\mu$ and $\la$ such that $E_\mu^{\la,1}$ {  admits a unique minimizer
which is not} $C^1$ regular.
\end{lem}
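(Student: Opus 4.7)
The goal is to exhibit an explicit pair $(\mu,\la)$ for which the energy $E_\mu^{\la,1}$ has a unique minimizer that possesses a corner. I would proceed by constructing a simple atomic example and analyzing it by hand.

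\textbf{Construction.} Pick three points in $\R^2$, say $A=(-1,0)$, $B=(1,0)$, and $C=(0,h)$ for some $h>0$, and take $\mu=\delta_A+\delta_B+M\delta_C$, with $M\gg1$ to be chosen. Pick $\la\in(0,\la_0)$ small enough that the trivial curves reduced to a single point are not optimal (so the minimizer has positive length), but with $h$ small enough compared with the separation $|A-B|=2$ that one still prefers a single arc rather than something degenerate. The explicit window for $(\la,h,M)$ will be pinned down by the estimates below.

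\textbf{Key steps.} First, use the compactness/existence arguments analogous to Theorem \ref{ext} (without the curvature term, this is the standard Go{\l}{\polhk{a}}b / Blaschke compactness for $\A$ from \cite{BOS}) to obtain existence of a minimizer $\vph$. Next, by choosing $M$ sufficiently large I force $C\in\ggm_\vph$: if $d(C,\ggm_\vph)\ge\eta>0$ then the cost is at least $M\eta$, whereas comparing with the polyline $A\to C\to B$ gives a uniform upper bound on $\inf E_\mu^{\la,1}$ depending only on $h$ and $\la$; choosing $M$ larger than this bound divided by any fixed $\eta$ yields a contradiction, so $C\in\ggm_\vph$. By the same token, if $d(A,\ggm_\vph)>0$ then truncating the curve near $A$ and then extending by a straight segment to $A$ strictly decreases the cost (the mass $1$ at $A$ dominates the $\la$ penalty of the added segment once $d(A,\ggm_\vph)$ exceeds $\la$); a symmetric statement holds at $B$. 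After tuning $(\la,h,M)$ this forces $\{A,B,C\}\subset\ggm_\vph$.

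Now I would reduce to a polyline problem. Since $\ggm_\vph$ is compact and path-connected and contains $\{A,B,C\}$, any minimizer has length at least the length of the Steiner tree on $\{A,B,C\}$. Among path-connected sets that are images of curves in $\C$ (hence connected and of finite $\H^1$), the subset of $\ggm_\vph$ actually attracting mass consists of three projection points $p_A,p_B,p_C$; the remaining part of $\ggm_\vph$ only contributes to $\la\H^1$, so it can be removed without increasing the cost, reducing the candidate to the smallest connected set containing $\{A,B,C\}$, namely a tree with leaves in $\{A,B,C\}$. Because $\vph$ must be a curve (parameterized interval), not an arbitrary tree, this forces the image to be the polyline joining $A$, $C$, $B$ in some order; by symmetry and the triangle inequality, the ordering $A\to C\to B$ is the unique minimizer and has a corner at $C$.

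\textbf{Main obstacle.} The delicate point is tuning the three parameters $(\la,h,M)$ so that all three of the following hold simultaneously: (i) the mass at $C$ forces $C$ on the curve, (ii) the masses at $A$ and $B$ force them to be endpoints (rather than the curve stopping short and paying the projection distance), and (iii) the optimum is not to replace the polyline by the straight segment $\overline{AB}$ (which happens if $M$ is too small or $h$ too large). A careful book-keeping of the cost of the polyline $A\to C\to B$ (length $2\sqrt{1+h^2}$, no projection cost) against the cost of the segment $\overline{AB}$ (length $2$, projection cost $Mh$) gives the range $\la<\la_0(h)$ and $M>M_0(\la,h)$ needed. Uniqueness follows because both competitors just described are distinct polylines and the cost is strictly convex in the position of the middle vertex once $C$ is forced on the curve. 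For the full verification of all these estimates I refer to \cite{Slepcev}.
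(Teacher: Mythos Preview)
The paper does not prove this lemma; it is quoted from \cite{Slepcev}. Note also (see item ($*$) in the introduction) that the example in \cite{Slepcev} uses a measure $\mu\ll\L^d$ with $C^\infty$ density, not an atomic one --- so your closing deferral of ``full verification'' to \cite{Slepcev} is misplaced, since that reference contains a different construction. Your three-atom route is more elementary and, if completed, would give a self-contained proof; the smooth-density example in \cite{Slepcev} is harder but yields the stronger statement that lack of $C^1$ regularity persists even for very regular data.

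The atomic argument as written has real gaps. First, the condition you give for forcing $A,B$ onto $\ggm_\vph$ is wrong: with $p=1$ and unit mass at $A$, extending by a segment of length $\eta$ to $A$ changes the energy by $(\la-1)\eta$, negative for every $\eta>0$ as soon as $\la<1$, not ``once $d(A,\ggm_\vph)>\la$''. Moreover, if the nearest point of $\ggm_\vph$ to $A$ lies in the interior of the parametrizing interval you cannot simply append a segment (a curve has only two endpoints); you would need a back-and-forth spike costing $2\la\eta$, hence $\la<1/2$, or a separate argument that the projections of $A,B$ are endpoints. Second, uniqueness is not established: you compare the polyline $A\to C\to B$ only against a few explicit competitors, and ``strict convexity in the middle vertex'' does not rule out non-polyline curves through $A,B,C$, curves visiting $C$ more than once, or curves with doubling. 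What is actually needed is the clean reduction: once $A,B,C\in\ggm_\vph$ the projection cost vanishes identically and the problem becomes the strict minimization of $\la L$ over curves through the three points, whence the polyline $A\to C\to B$ is the unique minimizer by the strict triangle inequality.
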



\begin{lem}
Given $d\geq 2$, a sequence of measures $\{\mu_n\}$ compactly supported in $\R^d$, a {  finite measure $\mu \geq 0$ compactly supported in $\R^d$},  such that $\{\mu_n\}
\rhu\mu $, and parameters $\la,\vep>0$, $\p\geq 1$, then 
$\{\E_{\mu_n}^{\la,\vep,\p}\}$ {  $\ggm$-converges to} $ \EE$ {  in the
topology of $\mathcal{C}$}.
\end{lem}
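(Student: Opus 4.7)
The approach is entirely parallel to the proof of Lemma \ref{gamma}: verify the $\Gamma$-$\liminf$ and $\Gamma$-$\limsup$ inequalities in the topology of $\mathcal{C}$. The only genuinely new ingredient with respect to Lemma \ref{gamma} is that the reference measure now varies with $n$, so the distance term must be analyzed jointly in the integrand and the integrator. I will implicitly use the standing assumption that the supports of $\{\mu_n\}$ are contained in a common compact set $K\subset\R^d$ and that $\sup_n \mu_n(\R^d)<+\infty$, both of which are natural in this weak-$*$ setting.

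For the $\Gamma$-$\liminf$ inequality, consider an arbitrary sequence $\{\vph_n\}\rca \vph$. I may restrict to the case $\liminf_{n\to+\infty} \E_{\mu_n}^{\la,\vep,\p}(\vph_n)<+\8$, otherwise there is nothing to prove. Exactly as in Theorem \ref{ext}, the uniform bound on the energies together with $\{\vph_n\}\rca \vph$ lets me extract a subsequence (not relabeled) along which $\{\Phi(\vph_n)\}$ converges to $\Phi(\vph)$ in the strong topology of $W^{1,2}([0,1];\R^d)$, and $\{\kappa_{\Phi(\vph_n)}\}$ converges weakly in $L^2$ to $\kappa_{\Phi(\vph)}$; combining with the identity \eqref{note} and $\{L(\vph_n)\}\to L(\vph)$ this gives
\[
\liminf_{n\to+\8} \Big( \la L(\vph_n) + \vep \int_0^{L(\vph_n)} |\kappa_{\vph_n}|^2\d\L^1 \Big) \geq \la L(\vph) + \vep \int_0^{L(\vph)} |\kappa_{\vph}|^2\d\L^1.
\]
For the distance term, the $\mathcal{C}$-convergence $\{\vph_n\}\rca \vph$ implies $\{\ggm_{\vph_n}\}\rdh \ggm_\vph$, hence the continuous bounded functions $f_n(x):=d(x,\ggm_{\vph_n})^\p$ converge uniformly on $K$ to $f(x):=d(x,\ggm_\vph)^\p$. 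Splitting
\[
\Big| \int_{\R^d} f_n \d\mu_n - \int_{\R^d} f \d\mu \Big| \leq \|f_n-f\|_{L^\8(K)}\, \mu_n(K) + \Big| \int_{\R^d} f \d\mu_n - \int_{\R^d} f \d\mu \Big|,
\]
the first term vanishes by uniform convergence of $f_n$ to $f$ together with boundedness of $\{\mu_n(K)\}$, while the second vanishes by weak-$*$ convergence applied to the fixed continuous bounded function $f$. Combining these three ingredients yields the $\liminf$ inequality.

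For the $\Gamma$-$\limsup$ inequality, I may restrict to $\vph\in H^2([0,L(\vph)];\R^d)\cap\C$, since otherwise the target value is $+\8$ and any sequence is admissible. In this case I take the constant recovery sequence $\vph_n:=\vph$: the length and curvature terms are independent of $n$, while $\int_{\R^d} d(x,\ggm_\vph)^\p \d\mu_n \to \int_{\R^d} d(x,\ggm_\vph)^\p \d\mu$ directly by weak-$*$ convergence of $\{\mu_n\}$ tested against the fixed continuous bounded function $x\mapsto d(x,\ggm_\vph)^\p$ on $K$. Hence $\lim_{n\to+\8}\E_{\mu_n}^{\la,\vep,\p}(\vph)=\E_\mu^{\la,\vep,\p}(\vph)$, which gives the recovery inequality.

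The main obstacle is the $\liminf$ step for the distance term, because both the integrand and the integrating measure vary with $n$. It is overcome by the triangle-inequality splitting above: Hausdorff convergence of $\{\ggm_{\vph_n}\}$ (a consequence of $\mathcal{C}$-convergence) upgrades the pointwise convergence of the distance functions to uniform convergence on $K$, which in turn decouples the two sources of variation and lets the weak-$*$ convergence of $\{\mu_n\}$ be invoked against the \emph{fixed} limit integrand $d(\cdot,\ggm_\vph)^\p$.
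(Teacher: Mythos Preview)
Your proof is correct and follows essentially the same approach as the paper's: both establish the $\Gamma$-$\liminf$ by invoking the curvature lower-semicontinuity argument from Theorem~\ref{ext} and handling the distance term via a triangle-inequality split that decouples the varying integrand from the varying measure, and both take the constant recovery sequence $\vph_n:=\vph$ for the $\Gamma$-$\limsup$. Your split $\|f_n-f\|_{L^\infty(K)}\mu_n(K)+|\int f\,\d\mu_n-\int f\,\d\mu|$ is the mirror image of the paper's (which inserts $\int f_n\,\d\mu$ instead of $\int f\,\d\mu_n$), and is arguably cleaner since it tests weak-$*$ convergence against a single fixed function; you are also more explicit about the standing assumptions (common compact $K$, uniformly bounded mass) that the paper uses tacitly.
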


\begin{proof}
The proof will be split in two steps.

\medskip

 $\ggm-\liminf$ inequality: for any sequence $\{\vph_n\}\rca \vph$,
it holds $\liminf_{n\to+\8} \E_{\mu_n}^{\la,\vep,\p}(\vph_n) \geq \EE(\vph)$.

If $\liminf_{n\to+\8} \E_{\mu_n}^{\la,\vep,\p}(\vph_n)=+\8$, then the $\ggm-\liminf$ inequality follows.
Thus assume (upon subsequence, which will not be relabeled)
$\liminf_{n\to+\8} \E_{\mu_n}^{\la,\vep,\p}=\lim_{n\to+\8} \E_{\mu_n}^{\la,\vep,\p}(\vph_n)<+\8$.
Convergences $\{\mu_n\}\rhu\mu$ and $\{\vph_n\}\rca\vph$
give $\{L({\vph_n})\}\ra L(\vph)$, and
\begin{align} 
\lim_{n\to+\8}  \left|
 \int_{\R^d} d(x,\ggm_{\vph_n})^\p \d\mu_n - \int_{\R^d} d(x,\ggm_{\vph})^\p \d\mu \right|
&= \lim_{n\to+\8} \left|\int_{\R^d} d(x,\ggm_{\vph_n})^\p \d\mu_n -  \int_{\R^d} d(x,\ggm_{\vph_n})^\p \d\mu \right|
\notag\\ 
&+ \left|\int_{\R^d} d(x,\ggm_{\vph_n})^\p \d\mu -  \int_{\R^d} d(x,\ggm_{\vph})^\p \d\mu \right| 
=0.\label{gamma 1}
\end{align}
Then { the} same arguments from the proof of Theorem \ref{ext}
prove lower semicontinuity for the curvature term, hence
${\disp{\liminf_{n\to+\8} \E_{\mu_n}^{\la,\vep}(\vph_n)\geq \EE(\vph)}}$.

\medskip

 $\ggm-\limsup$ inequality: for any $\vph\in \C$,
there exists a sequence $\{\vph_n\}\rca \vph$
such that $${ \limsup_{n\to+\8}}\ \E_{\mu_n}^{\la,\vep,\p}(\vph_n) \leq \EE(\vph).$$

Given $\vph\in \C$, 
case $\EE(\vph)=+\8$ is trivial. Assume $\EE(\vph)<+\8$, and
let $\vph_n:=\vph$ for any $n$. Clearly
$$(\fal n) \quad L({\vph_n})= L(\vph), \qquad 
\int_{0}^{L({\vph_n})} |\kappa_{\vph_n}|^2 \d\L^1 = \int_{0}^{L(\vph)} |\kappa_{\vph}|^2 \d\L^1. $$
Since $\{\mu_n\}\rhu \mu$, it follows
$$\lim_{n\to+\8}\int_{\R^d}d(x,\ggm_{\vph_n})^\p\d\mu_n =\int_{\R^d}d(x,\ggm_{\vph})^\p\d\mu,$$
concluding the proof.
\end{proof}

\section{Regularity} \label{reg}
The aim of this section is to prove Theorem \ref{1-1}{ .}
As a consequence, we have Corollaries \ref{points 0}
and \ref{mass proj}, which estimate (for minimizers) the mass projecting on a subset
in relation to its length. This proves that Problem \ref{main}
is effectively a better candidate for application in data parameterization
than Problem \ref{main1}, since it does not exhibit the undesirable
properties described in ($*$) and ($**$). The results are proven for 
generic (finite) dimension. However
while
 Theorem \ref{1-1} is proven for generic measures,
Corollaries \ref{points 0} and \ref{mass proj} are restricted to
 absolutely continuous measures.

\medskip

We first prove a weaker regularity result.

\begin{lem}\label{half}
Given $d\geq 2$, a {  finite measure $\mu \geq 0$ compactly supported in $\R^d$}, and parameters $\la,\vep>0$, $\p\geq 1$,
any minimizer $\vph\in \argmin_\C \EE$ is $C^{1,1/2}$ regular with
$$\big(\fal t_0,t_1\in [0,L(\vph)]\big)
\qquad |\vph'(t_1)-\vph'(t_0)| \leq \left(\frac{\big(\!\D\big)^\p}{\vep}|t_1-t_0|\right)^{1/2}.$$
\end{lem}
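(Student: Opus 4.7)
The plan is to exploit the fact that $\vph$ is arc-length parametrized, so the curvature is essentially $|\vph''|$, and then combine a Cauchy--Schwarz estimate with the a priori curvature bound \eqref{e2} from Lemma~\ref{comp}.

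First I would observe that finiteness of $\EE(\vph)$ forces $\vph \in H^2([0,L(\vph)];\R^d)\cap \C$, and since $|\vph'|=1$ a.e., differentiating $|\vph'|^2 \equiv 1$ shows $\vph''\perp \vph'$ a.e. In particular the curvature $\kappa_\vph$ satisfies $|\kappa_\vph|=|\vph''|$ a.e., so $\vph''\in L^2$ with
\[
\int_0^{L(\vph)}|\vph''|^2\,\d\L^1 \;=\; \int_0^{L(\vph)}|\kappa_\vph|^2\,\d\L^1.
\]

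Next I would write, for arbitrary $0\le t_0< t_1\le L(\vph)$, the identity $\vph'(t_1)-\vph'(t_0)=\int_{t_0}^{t_1}\vph''(s)\,\d s$, which is valid because $\vph'$ is absolutely continuous (being in $H^1$). Applying the triangle inequality and then Cauchy--Schwarz gives
\[
|\vph'(t_1)-\vph'(t_0)| \;\le\; \int_{t_0}^{t_1}|\vph''(s)|\,\d s \;\le\; \sqrt{|t_1-t_0|}\,\left(\int_{t_0}^{t_1}|\kappa_\vph|^2\,\d\L^1\right)^{\!1/2}.
\]

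Finally I would apply the curvature estimate \eqref{e2} of Lemma~\ref{comp}, which bounds the full integral $\int_0^{L(\vph)}|\kappa_\vph|^2\,\d\L^1$ by $(\diam\supp\mu)^p/\vep$, to conclude
\[
|\vph'(t_1)-\vph'(t_0)| \;\le\; \left(\frac{(\diam\supp\mu)^p}{\vep}\,|t_1-t_0|\right)^{1/2},
\]
which is exactly the claimed estimate and in particular yields $\vph\in C^{1,1/2}$.

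There is no real obstacle: the only delicate point is justifying $|\vph''|=|\kappa_\vph|$ a.e., which simply follows from the arc-length parametrization. The sharper $C^{1,1}$ estimate in Theorem~\ref{1-1} cannot be obtained from this soft argument alone, as it requires pointwise use of the Euler--Lagrange condition rather than just the energy bound.
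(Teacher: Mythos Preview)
Your proposal is correct and follows essentially the same route as the paper: both arguments combine the fundamental theorem of calculus for $\vph'$, Cauchy--Schwarz (H\"older), and the a priori bound $\int|\kappa_\vph|^2\le(\D)^p/\vep$ coming from comparison with a singleton. The paper invokes the energy comparison directly rather than citing Lemma~\ref{comp}, and is slightly more implicit about the identification $|\kappa_\vph|=|\vph''|$ under arc-length parametrization, but the substance is identical.
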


\begin{proof}
Let $\vph$ be an arbitrary minimizer. 
{ Definition \eqref{curv} requires that $\vph\in H^2([0, L(\vph)];\R^d)$,
which, 
} 
{\color{black} by Sobolev embedding,}
implies that $\vph'$ is continuous.

 Thus
$\vph$ is $ C^1$ regular.
H\"older inequality gives
\begin{align*}
\frac{|\vph'(t_1)-\vph'(t_0)|^2}{|t_1-t_0|}&\leq
\frac1{|t_1-t_0|}\left(\int_{t_0}^{t_1} |\kappa_\vph| \d\L^1\right)^2 \leq
\int_{t_0}^{t_1} |\kappa_\vph|^2 \d\L^1
{ 
\le \frac{\EE(\vph)}{\vep} \le \frac{\big(\!\D\big)^\p}{\vep},
}
\end{align*}
{ 
with the last inequality due to the fact that $\vph$ has no more energy
compared to any singleton $\psi:\{0\}\lra \{z\}$, $z\in \supp(\mu)$, i.e.
\[\EE(\vph) \le \EE(\psi) \le
\int_{\supp(\mu)} |x-z|^p \d\mu(x)\leq \diam(\supp(\mu))^p. \]
}
%
{ The proof is thus complete.}
\end{proof}

Note that the proof of Lemma \ref{half} is quite simple, and
consists of comparing the curvature term with condition \eqref{EE finite},
without any kind of construction. As one may expect,
this kind of proof can be refined, and stronger regularity results 
can be established.

\subsection{Lipschitz regularity of derivatives}

{  Note that the mean value theorem implies that for {  any $0<C< +\infty$ }
 \begin{equation}\label{p lem}
(\fal\p\geq 1, \; 0\leq a,b\leq { C})\qquad |a^p-b^p|\leq 
\p |a-b|{ C}^{\p-1}. \qquad
\end{equation}
}
%


Now we are ready to prove Theorem \ref{1-1}.
Note that,
for an vectors $v_1,v_2\in S^{d-1}$ (unit ball
of $\R^d$) 
it holds that
\begin{equation} \label{1-1 2 lem}
\frac{1}{2}\angle v_1v_2 \leq |v_1-v_2|\leq \angle v_1 v_2.
\end{equation}
This follows from the fact that,
$|v_1-v_2|=2\sin\dfrac{\angle v_1v_2}{2} \geq \angle v_1v_2$,  since $0 \leq \angle v_1v_2 \leq \pi$.
{\color{black} For a schematic representation, see Figure \ref{half angle}.}
 \nc


\begin{figure}[ht]
	\includegraphics[scale=.5]{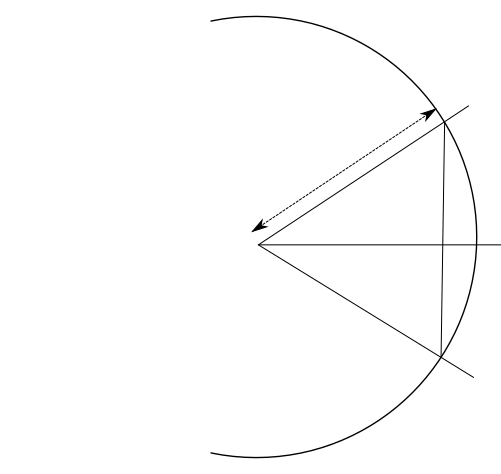}
	\put(-97,80){$0$}
		\put(-28,78){$h$}
	\put(-18,130){$v_1$}
	\put(-23,35){$v_2$}
	\put(-80,88){$\th/2$}
		\put(-62,115){$1$}
\put(-55,165){$S^{d-1}$}
	\caption{Schematic representation of $\th:=\angle v_1v_2$. Elementary geometric
	considerations give $|v_1-v_2|=2|v_1-h| = 2\sin(\th/2) $. }
\label{half angle}
\end{figure}

\begin{proof} (of Theorem \ref{1-1})
Let $\vph$ be an arbitrary minimizer. 
Since Lemma \ref{half} proved that $\vph'$ is continuous, 
\begin{equation}\label{1-1 3}
\lim_{|s-t|\ra 0} |\vph'(s)-\vph'(t)| = \lim_{|s-t|\ra 0}\angle \vph'(s)\vph'(t)= 0.
\end{equation}




To show that $\varphi'$ is Lipschitz continuous it suffices to show that 
\[  \sup_{0\leq t_0 < L(\varphi)} \, \limsup_{t_1 \to t_0+}  \frac{| \vph'(t_0)-\vph'(t_1)|}{|t_1-t_0|} < \infty.\]
Consider any $0\leq t_0 < L(\varphi)$ and $M < \limsup_{t_1 \to t_0+}  \frac{| \vph'(t_0)-\vph'(t_1)|}{|t_1-t_0|}$.
\nc

{  The main ideas are:

\begin{listi}
\item Using the original curve $\vph$, we construct the competitor $\psi$ in \eqref{psi defn} below.
Intuitively, $\psi$ is obtained by {  first expanding by a factor of $2$
a piece of the original curve $\vph$ with very high curvature (potentially
losing connectedness in the process), 
and then translating in a suitable way 
the remaining part} to regain connectedness (see \eqref{psi defn}).

\item We estimate the difference $E_\mu^\la(\psi)-E_\mu^\la(\vph)$.
with $E_\mu^\la$ defined in Problem \ref{main1}.  Since the translations 
in the previous step are by vectors with norm at most $\xi$, it follows that
$$\int_{\R^d} [d(x,\ggm_\psi)^p-d(x,\ggm_\vph)^p]\d\mu(x) \le C_1\xi,$$ 
for some geometric constant $C_1>0$
(Claim 1. below). Similarly,
since we will add an arc of circle (to regain connectedness for $\psi$), the difference of lengths
$L(\psi)-L(\vph)$ is exactly $\xi$. Finally,
we estimate the gain for the curvature term (Claim 2.).
\end{listi}
}
Let $h$ be the homothety of center $\vph(t_0)$ and ratio 2. That is,
for any $x\in \R^d$, $h(x)$ is defined \grn by
$h(x) = \vph(t_0) + 2\big(x-\vph(t_0)\big)$. 
For $t_1>t_0$ \nc consider the curve 
 \begin{equation}
\psi:[0,L(\vph)+ \xi]\lra \R^d,\qquad 
\psi(t):=\left\{
\begin{array}{cl} 
\vph(t) & \text{if } t\leq t_0, \\
h\Big(\vph\big( (t_0+ t )/2 \big) \Big) & \text{if } t_0 \leq t \leq t_0+ 2\xi, \\
\vph(t-\xi)+ \big(h(\vph(t_1)) - \vph(t_1) \big) & \text{if } t\geq t_0+ 2\xi,
\end{array}\right.  \label{psi defn}
\end{equation}
\grn
where, 
\[ \xi : =|\vph(t_1)-\vph(t_0)|. \]
 As $|\vph'|\equiv 1$, $\L^1$-a.e.,
and $\psi(t_0)=\vph(t_0)$, it follows that
\begin{equation}\label{1-1 est 2a}
|h(\vph(t_1))-\vph(t_1)| 
=|\vph(t_1)-\vph(t_0)|=\xi.
\end{equation} \nc

\begin{figure}[ht]
	\includegraphics[scale=.8]{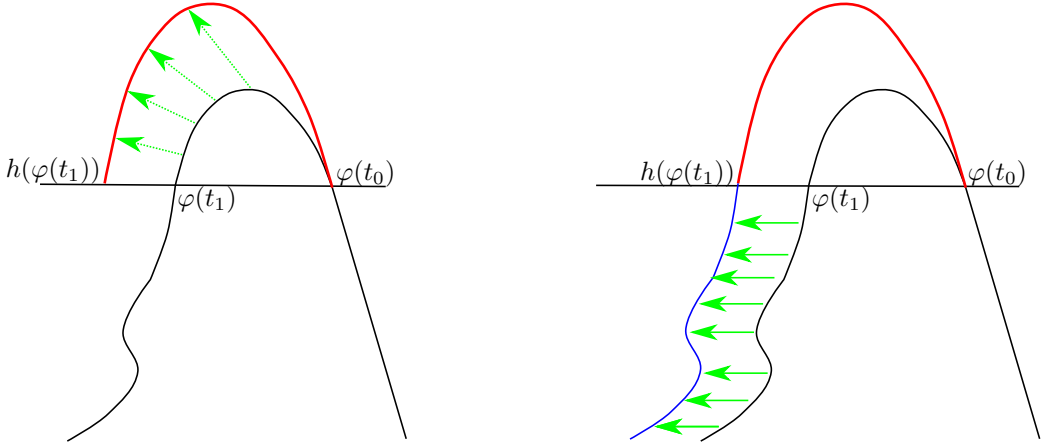}
	\put(-267,100){$\vph(t_0)$}
		\put(-327,90){$\vph(t_1)$}
		\put(-392,101){$h(\vph(t_1))$}
	\put(-28,100){$\vph(t_0)$}
	\put(-87,90){$\vph(t_1)$}
	\put(-152,100){$h(\vph(t_1))$}
	\caption{Left: first, scale the part $\vph([t_0,t_1])$ using the homothety $h$
		with center $\vph(t_0)$ and ratio equal to $2$. 
		Right: translate the part $\vph([t_1,L(\vph)])$ by the vector $h(\vph(t_1))-
		\vph(t_1)$ so to regain connectedness. The competitor $\psi$ is made
		of $\vph([0,t_0])$ from the original curve, and the red and blue pieces.
	}
\end{figure}

By construction $\psi$ is parameterized by arc-length, and
$L(\psi)=L(\vph)+\xi$.
\medskip

 Claim 1. 
$ E_\mu^\la(\psi)-E_\mu^\la(\vph)\leq (2\vartheta +\la )\xi $,
with $\vartheta$ defined in \eqref{cal2}. 
%
Since $\psi_{|[t_1+\xi,L(\vph)+\xi]}$ differs from $\vph_{|[t_1,L(\vph)]}$ by a translation
of the vector $h(\vph(t_1))-\vph(t_1)$, for any point
$$x\in\mathcal{K}_1:=\{z:d(z,\ggm_\vph)=|z-\vph(s)| {\text{ for some }} s\in [t_1,L(\vph)]\}$$
it holds
\begin{align*}
 d(x,\ggm_\psi)\leq  d(x,\ggm_{\psi_{ |[t_1+\xi,L(\vph)+\xi]}})&\leq
 d(x,
\ggm_{\vph_{|[t_1,L(\vph)]}}) + |h(\vph(t_1))-\vph(t_1)|\\
&\overset{\eqref{1-1 est 2a}}{\leq}  d(x,
\ggm_{\vph_{|[t_1,L(\vph)]}}) +  \xi.
\end{align*}
Integrating with respect to $\mu$ gives, in view of \eqref{p lem},
\begin{align}
  \int_{\mathcal{K}_1} d(x,\ggm_{\psi} )^\p \d\mu
- \int_{\mathcal{K}_1} d(x,\ggm_{\vph} )^\p \d\mu  
& \leq \int_{\mathcal{K}_1} |d(x,\ggm_{\psi} )^\p-d(x,\ggm_{\vph} )^\p|\d\mu\notag\\
& \leq 
 \xi {\p \big(D+ \xi\big)^{\p-1}} \mu(\mathcal{K}_1).\label{1-1 est 3}
\end{align}
Since $ \psi_{|[0,t_0]}=\vph_{|[0,t_0]}$, 
for any point
$$x\in\mathcal{K}_2:=\{z:d(z,\ggm_\vph)=|z-\vph(s)| \text{ for some } s\in [0,t_0]\}$$
it holds that
$$d(x,\ggm_\psi) \leq d(x,\ggm_{\psi_{|[0,t_0]}}) = d(x,\ggm_{\vph_{|[0,t_0]}}),  $$
thus
\begin{equation}\label{1-1 est 4}
  \int_{\mathcal{K}_2} d(x,\ggm_{\psi} )^\p \d\mu
\leq \int_{\mathcal{K}_2} d(x,\ggm_{\vph} )^\p \d\mu .
\end{equation}
Given that for any point
$$x\in\mathcal{K}_3:=\{z:d(z,\ggm_\vph)=|z-\vph(s)| \text{ for some } s\in [t_0,t_1]\}$$
it holds that
$$d(x,\ggm_\psi)\leq |x-\psi(t_0)|= |x-\vph(t_0)| \overset{|\vph'|\equiv 1\ \L^1\text{-a.e.}}{\leq} 
d(x,\ggm_{\vph}) + \xi,$$
integrating with respect to $\mu$ implies, in the view of \eqref{p lem},
\begin{equation}\label{1-1 est 5}
  \int_{ \mathcal{K}_3} d(x,\ggm_{\psi } )^\p \d\mu
- \int_{\mathcal{K}_3 } d(x,\ggm_{\vph} )^\p \d\mu  \leq \xi {\p\big(D+ \xi\big)^{\p-1}} \mu(\mathcal{K}_3) .
\end{equation}
Since $\R^d=\mathcal{K}_1\cup \mathcal{K}_2
\cup \mathcal{K}_3$, combining \eqref{1-1 est 3}, \eqref{1-1 est 4} and
\eqref{1-1 est 5} gives
\begin{equation}\label{1-1 est 6}
  \int_{\R^d} d(x,\ggm_{\psi} )^\p \d\mu
- \int_{\R^d} d(x,\ggm_{\vph} )^\p \d\mu  \leq 2{\p\big(D+ \xi\big)^{\p-1}}\xi  .
\end{equation}
By construction $L(\psi)=L(\vph)+\xi$. Since
$\xi$ can be chosen arbitrarily small, it follows
 $$E_\mu^\la(\psi)-E_\mu^\la(\vph)\leq (2\vartheta+\la)\xi  ,$$
proving the claim.

\medskip

 Claim 2.
 \grn There exists $t_1>t_0$, with $t_1-t_0$, and hence $\xi$, arbitrarily small such that \nc
\[
\int_{0}^{L(\vph)} |\kappa_\vph|^2 \d\L^1-\int_{0}^{L(\vph)+\vep} |\kappa_\psi|^2 \d\L^1
\geq \frac12 M^2\xi.\]

 Note that $\psi_{|[0,t_0]}=\vph_{|[0,t_0]}$,
while $\psi_{|[t_1+\xi,L(\vph)+\xi]}$ 
differs from $\vph_{|[t_1,L(\vph)]}$ by a translation. Thus
\begin{equation}\label{1-1 est 1}
\int_0^{t_0} |\kappa_\psi|^2 \d\L^1 = \int_0^{t_0} |\kappa_\vph|^2 \d\L^1,
\quad \int_{t_1+\xi}^{L(\vph)+\xi} 
|\kappa_\psi|^2 \d\L^1 = \int_{t_1}^{L(\vph)} |\kappa_\vph|^2 \d\L^1.
\end{equation}
 Moreover 
\begin{align*}
\int_0^{L(\vph)+\xi} |\kappa_\psi|^2 \d\L^1
& = \int_0^{t_0} |\kappa_\psi|^2 \d\L^1
+\int_{t_0}^{t_1+\xi} |\kappa_\psi|^2 \d\L^1
+\int_{t_1+\xi}^{L(\vph)+\xi} |\kappa_\psi|^2 \d\L^1\\
&\overset{\eqref{1-1 est 1}}{=}
\int_{0}^{t_0} |\kappa_\vph|^2 \d\L^1
+\int_{t_0}^{t_1+\xi} |\kappa_\psi|^2 \d\L^1
+\int_{t_1}^{L(\vph)} |\kappa_\vph|^2 \d\L^1,
\end{align*}
therefore
$$\int_{0}^{L(\vph)} |\kappa_\vph|^2 \d\L^1-\int_{0}^{L(\vph)+\xi} |\kappa_\psi|^2 \d\L^1
=\int_{t_0}^{t_1} |\kappa_\vph|^2 \d\L^1 -
\int_{t_0}^{t_1+\xi} |\kappa_\psi|^2 \d\L^1.$$
By construction, it holds
$$ \int_{t_0}^{t_1+\xi} |\kappa_\psi|^2 \d\L^1 =
\frac12 \int_{t_0}^{t_1} |\kappa_\vph|^2 \d\L^1.$$
\grn Using H\"older inequality, by the definition of $M$ there exists $t_1>t_0$, with $t_1-t_0$ arbitrarily small such that  \nc
$$ \int_{t_0}^{t_1} |\kappa_\vph|^2 \d\L^1 \geq 
 \frac{|\vph'(t_1)-\vph'(t_0)|^2}{|t_1-t_0|} 
\geq 
M^2 \xi,$$
and thus
\begin{equation}\label{1-1 est 7}
\int_{0}^{L(\vph)} |\kappa_\vph|^2 \d\L^1-\int_{0}^{L(\vph)+\xi} |\kappa_\psi|^2 \d\L^1
\geq \frac12 M^2\xi,
\end{equation}
and the claim is proven.

\bigskip

Combining Claims 1. and 2. with the minimality of $\vph$ gives
$$0\geq \EE(\vph)-\EE(\psi ) \geq \frac12 \vep M^2\xi- (2\vartheta+\la )\xi ,$$
hence
\begin{equation*}
M\leq \sqrt{ \frac{2(2\vartheta+\la) }{\vep} },
\end{equation*}
and the proof is complete.
\end{proof}

Now we investigate the behavior of estimate \eqref{Y}
under scaling.
Let $\mu$, $\la$, $\vep$, $\p$ be given,
and let $\vph$ be a minimizer of $\EE$. Endow $\R^d$
with an orthogonal coordinate system, and consider the linear map
$$T:\R^d\lra \R^d,\qquad T(x):=rx,$$ 
where $r>0$ is a given homothety ratio. 
Set 
$$\vph_r: [0,rL(\vph)]\lra \R^d,\qquad \vph_r(t):=r\vph(t/r).$$
Note that
\begin{align*}
\int_{\R^d} d(x,\ggm_{\vph_r})^\p \d T_\sh\mu &= r^{ \p-d} \int_{\R^d} d(x,\ggm_{\vph})^\p \d\mu,\\
 L(\vph_r) &= rL(\vph), \\ 
\int_0^{L(\vph_r)}|\kappa_{\vph_r}|^2\d\L^1 & =\frac1r \int_0^{L(\vph)}|\kappa_{\vph}|^2\d\L^1,
\end{align*}
hence
\begin{align*}
r^{ \p-d} \EE(\vph) & =
r^{ \p-d}
\left(\int_{\R^d} d(x,\ggm_{\vph})^\p \d\mu + \la L(\vph) + \vep \int_0^{L(\vph)}|\kappa_{\vph}|^2\d\L^1
\right)\\
& =
\int_{\R^d} d(x,\ggm_{\vph_r})^\p\d T_\sh\mu
+r^{ \p-d-1}\la L(\vph_r) + r^{  \p-d+1}\vep  \int_0^{L(\vph_r)}|\kappa_{\vph_r}|^2\d\L^1\\
&=
\E_{T_\sh\mu}^{\la_r, \vep_r, \p}(\vph_r),
\end{align*}
where
$\la_r:=r^{ \p-d-1}\la$, $\vep_r:= r^{\p-d+1}\vep$.
Since $\vph$ is a minimizer for $\EE$, it follows that
$\vph_r$ is a minimizer for $\E_{T_\sh\mu}^{\la_r, \vep_r, \p}$.
Note that 
$\diam\supp(T_\sh\mu) =r\cdot\D $.
Theorem \ref{1-1} gives that $\vph_r'$ is $Y_r$-Lipschitz continuous,
where
$$Y_r:=\sqrt{ \frac{2\left(2\p \big( 2r\D\big)^{\p-1} +\la_r \right)}{\vep_r} }.$$
Note that 
\begin{equation}\label{scale}
Y_r=\frac1r \sqrt{ \frac{2\left(2\p \big(2\D\big)^{\p-1} + \la \right)}{\vep} } =\frac Yr,
\end{equation}
with $Y$ defined in \eqref{Y}. By construction, $\vph_r'(t)=\vph_r'(t/r)$ for any $t$,
 therefore the respective Lipschitz constants satisfy
 $Y_r=Y/r$, which is compatible with
\eqref{scale}. 

\subsection{Heuristic arguments}
We present some heuristic arguments about the sharpness
of the estimate \eqref{Y},
which suggest that such estimate has optimal order in $\vep$,
for small values of $\vep$.
Choose $\la$ such that $2\pi\la<(2\pi+1)$, and let $\p:=1$.
For $r>0$, let $S_r:=\{(x,y)\in \R^2:x^2+y^2=r^2\}$,
and 
$\mu:=\H^1_{\llcorner S_r}$.
We compare the following competitors:
\begin{listi}
\item let $\psi_r\in \C$ be a parameterization of $S_r$,
\item let $\psi_r^*\in C$ be a parameterization of the segment 
$\{(1-s)(-r,0)+s(r,0):s\in [0,1]\}$.
\end{listi}
Note that
$$\int_{\R^2} d(x,\ggm_{\psi_r})\d\mu = 0,\qquad
L(\psi_r)=2\pi r,\qquad
\int_0^{L(\psi_r)} |\kappa_{\psi_r}|^2 \d\L^1 = \frac{2\pi}{r}, $$
$$\int_{\R^2} d(x,\ggm_{\psi_r^*})\d\mu = r,\qquad
L(\psi_r^*)=2 r,\qquad
\int_0^{L(\psi_r^*)} |\kappa_{\psi_r^*}|^2 \d\L^1 = 0. $$
Thus it follows
$$\E_\mu^{\la,\vep,1}(\psi_r)= 2\pi\la r+ \frac{2\pi\vep}r,
\qquad
\E_\mu^{\la,\vep,1}(\psi_r^*)= r(1+2\la).$$
Note that for large $r$, $\psi_r$ is more convenient than
$\psi_r^*$, since $2\pi\la r$ dominates $2\pi\vep/r$, and
hypothesis $2\pi\la<(2\pi+1)$
implies $\E_\mu^{\la,\vep,1}(\psi_r)<\E_\mu^{\la,\vep,1}(\psi_r^*) $.
For $r\ra 0^+$, the term $2\pi\vep/r$ diverges. For
$r \simeq \sqrt{\vep}$ (i.e. $r$ and
$\sqrt{\vep}$ differ by a multiplicative constant), it holds
$$\E_\mu^{\la,\vep,1}(\psi_r)\simeq  2\pi(1+\la) \sqrt{\vep},
\qquad \E_\mu^{\la,\vep,1}(\psi_r^*) \simeq \sqrt{\vep}(1+2\la),$$
hence $\E_\mu^{\la,\vep,1}(\psi_r)$ and $\E_\mu^{\la,\vep,1}(\psi_r^*)$
have the same order in $\vep$.
Thus the maximum radius $r_0$ such that for $r<r_0$, $\psi_r^*$ is
more advantageous than $\psi_r$, has order
$r_0\simeq \sqrt{\vep}$. The derivative
$\psi_{r_0}'$ 
is approximately (upon constants independent of $\vep$)
 $\sqrt{1/\vep}$-Lipschitz continuous.

\bigskip

We present some heuristic arguments about the choice of parameters $\la,\vep$,
under given $p\geq 1$.
\begin{figure}[h!]
\includegraphics[scale=1.0]{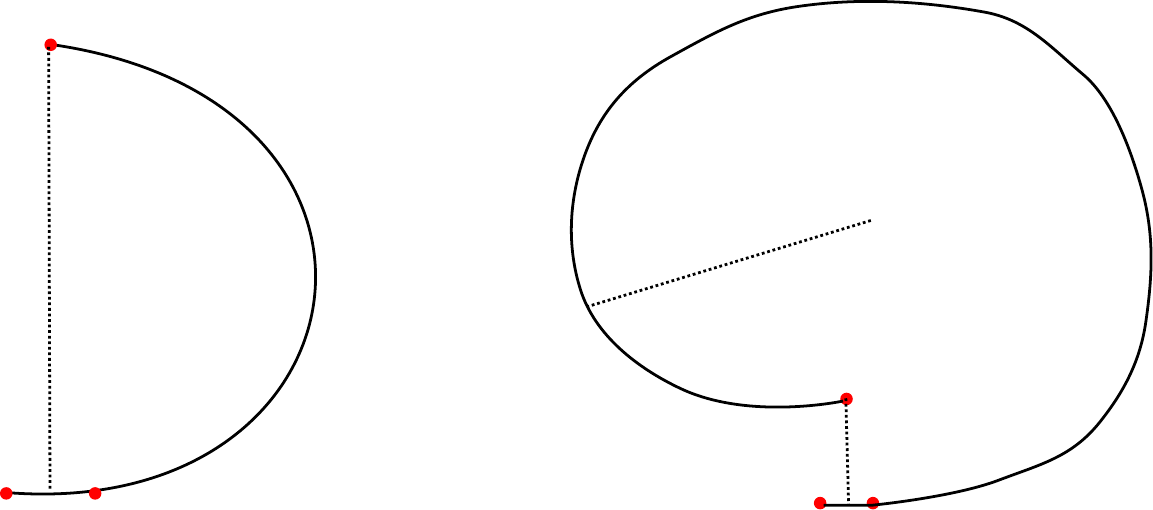}
\put(-330,70){$L$}
\put(-97,15){$L$}
\put(-130,70){$r$}
\put(-340,120){\LARGE{$A$}}
\put(-10,120){\LARGE{$B$}}
\caption{A schematic representation of two possible choices of minimizers.
Note the different scale in the two figures. The masses are concentrated on red points.}
\label{heu}
\end{figure}
Let $\mu$ be a measures concentrated on the red points in Figure
\ref{heu}, and let $\vph$ be the minimizer (denoted by black solid
lines in Figure \ref{heu}). In the following arguments
the symbol ``$\simeq$'' will denote ``equal upon universal constants''.
For the sake of simplicity we omit any non influential constant.

Configuration $A$:
 $\la L(\vph)  \simeq \la L$,
 $\vep\int_0^{L(\vph)}| \kappa_\vph|^2\d\L^1 \simeq
\vep /L$,
 $\int_{\R^d} d(x,\ggm_\vph)^p \d\mu $ is infinitesimal
compared to the above quantities,
since $\sm\supseteq\supp(\mu)$.
Total energy of configuration $A$: approximately
$\la L+\vep/L$.

\medskip

Configuration $B$: 
 $r\geq L$,
 $\la L(\vph)  \simeq \la r$,
 $\vep\int_0^{L(\vph)}| \kappa_\vph|^2\d\L^1 \simeq
\vep /r$,
 $\int_{\R^d} d(x,\ggm_\vph)^p \d\mu $ is infinitesimal
compared to the above quantities,
since $\sm\supseteq\supp(\mu)$.
Total energy of configuration $B$: approximately
$\la r+\vep/r$. Direct computing gives
the optimal value $r=\sqrt{\vep/\la}$, corresponding to an
approximate value of $\sqrt{\la\vep}$ for the energy.

Note that configuration $B$ is ``less desirable''
in data parameterization, since the minimizer
contains points further away from $\supp(\mu)$. Moreover,
a necessary condition for configuration $B$ to be preferable
to configuration $A$ is
$\sqrt{\vep/\la}>L$, i.e. $\vep>L^2 \la$.
Thus, for $\vep<L^2 \la$, configuration $A$ is preferable.

\subsection{Consequences of Theorem \ref{1-1}}
Recall that Problem \ref{main} has been introduced to overcome 
the fact that minimizers of Problem \ref{main1} can fail to be $C^1$
regular, which is undesirable
for data parameterization. Corollaries \ref{points 0}
and \ref{mass proj} prove that if $\mu\ll\L^d$, then minimizers of Problem \ref{main}
do not exhibit such undesirable properties. Some preliminary discussion
is required.
For given $\sm\in \A$ (defined in Problem \ref{main pen}),
define the set-valued ``projection'' map as
\begin{equation}\label{projection}
\Pi_{\sm}:\R^d\lra \mathcal{P}({\sm}) ,\qquad
\Pi(x)=\{y\in\sm:  |x-y|=d(x,\sm) \},
\end{equation}
with $\mathcal{P}(\sm)$ denoting the power set of $\sm$.
We recall that in \cite{ManMen} it has been proven that
the ridge
$$\mathcal{R}_\sm:=\{x: \exists y,z\in \sm,\ y\neq z,\ |x-y|=|x-z|=d(x,\sm)\}=
\{x: \sh\Pi_\sm(x) \geq  2 \} $$
is $\H^{d-1,1}$ rectifiable. 
In particular, if $\mu\ll \L^d$, for any $\sm\in \A$
the ridge $\mathcal{R}_\sm$ is $\mu$-negligible, thus $\sh\Pi_\sm(x)=1$
 $\mu$-a.e. In this case it is possible to define, for $\mu$-a.e. point, the point-valued
 function
 \begin{equation}\label{proj}
 \pi_\sm: \R^d\lra \sm,\qquad
 \pi_\sm(x):=y,\quad y \text{ is the unique point of } \sm \text{ satisfying }
 \{y\}=\Pi_\sm(x).
 \end{equation}   

\begin{defn}
Given $\mu\ll\L^d$, $\sm\in \A$ and
a subset $B\sse\sm$, the quantity $\mu\big(\pi_\sm^{-1}(B) \big)$ will be referred to
as mass projecting on $B$ in $\sm$. 
\end{defn}
For the sake of brevity we will omit writing {\em ``in $\sm$''}  if no risk of confusion
arises.
Note that, for any curve $\psi\in \C$, its { image} $\ggm_\psi$
belongs to $\A$. 
 
 \begin{cor}\label{points 0}
 Given $d\geq 2$, a measure $\mu\ll \L^d$, parameters $\la,\vep>0$, 
 $\p\geq 1$, a minimizer
$\vph\in \argmin_\C \EE$ and a time $t\in \big(0,L(\vph)\big)$, it holds
$\mu\Big(\pi_{\ggm_\vph}^{-1}\big(\{\vph(t)\}\big) \Big)=0$, i.e.
 the mass projecting on $\{\vph(t)\}$ is zero. 
 \end{cor}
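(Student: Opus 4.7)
The plan is to show that the preimage $\pi_{\ggm_\vph}^{-1}(\{\vph(t)\})$ is contained in the affine hyperplane through $\vph(t)$ orthogonal to the tangent vector $\vph'(t)$, and then invoke $\mu\ll\L^d$.

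First, I would invoke Theorem \ref{1-1}: since $\vph$ is $C^{1,1}$, the unit tangent $\vph'(t)\in S^{d-1}$ is well-defined at the interior time $t\in(0,L(\vph))$. Define
\[ H_t := \{ x\in\R^d : \langle x-\vph(t),\vph'(t)\rangle = 0 \}, \]
an affine hyperplane of dimension $d-1$.

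The key step is to prove the inclusion $\pi_{\ggm_\vph}^{-1}(\{\vph(t)\})\sse H_t$. Pick any $x$ with $\pi_{\ggm_\vph}(x)=\vph(t)$, i.e.\ $\vph(t)$ is the unique closest point of $\ggm_\vph$ to $x$; in particular $|x-\vph(t)|\leq |x-\vph(s)|$ for all $s\in[0,L(\vph)]$. Expanding using $|\vph'|\equiv 1$ and the $C^1$ regularity of $\vph$, for $s$ near $t$ one has
\[ |x-\vph(s)|^2 - |x-\vph(t)|^2 = -2\langle x-\vph(t),\vph'(t)\rangle (s-t) + o(|s-t|). \]
Since $t$ is an interior time, $s-t$ may be taken of either sign; requiring non-negativity of the left-hand side forces $\langle x-\vph(t),\vph'(t)\rangle = 0$, so $x\in H_t$.

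Finally, since $H_t$ is a hyperplane, $\L^d(H_t)=0$, and the hypothesis $\mu\ll\L^d$ yields $\mu(H_t)=0$. Therefore
\[ \mu\Big(\pi_{\ggm_\vph}^{-1}\big(\{\vph(t)\}\big)\Big) \leq \mu(H_t) = 0, \]
which is the claim. I do not anticipate any genuine obstacle: the only point where care is needed is that $\vph'(t)$ must be meaningfully defined to even write down $H_t$, and this is precisely what the $C^{1,1}$ regularity from Theorem \ref{1-1} supplies; the restriction $t\in(0,L(\vph))$ is what permits the two-sided first-order comparison.
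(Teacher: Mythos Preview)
Your proposal is correct and follows essentially the same route as the paper: show that $\pi_{\ggm_\vph}^{-1}(\{\vph(t)\})$ is contained in the affine hyperplane through $\vph(t)$ orthogonal to $\vph'(t)$, then use $\mu\ll\L^d$. Your first-order expansion of $s\mapsto|x-\vph(s)|^2$ makes explicit the step the paper handles with the phrase ``locally approximated (in first order) by the tangent derivative''; one small remark is that you only need $C^1$ regularity of $\vph$ (already available from $\vph\in H^2$ and Sobolev embedding, as in Lemma~\ref{half}), so invoking the full $C^{1,1}$ estimate of Theorem~\ref{1-1} is more than necessary here.
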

 
\begin{proof}
Note that for sufficiently small $\eta$,
 the behavior of the curve $\vph_{|[t-\eta,t+\eta]}$ is locally approximated (in first order,
 upon translation and scaling) by
  the tangent derivative $\vph'(t)$. Thus the set 
  $$\{x\in \R^d:
 d(x,\ggm_\vph)=|x-\vph(t)|\} = \pi_{\ggm_\vph}^{-1}(\{\vph(t)\})$$ 
 is contained in $\Delta_t$, defined as the (unique) $(d-1)$-hyperplane
passing through $\vph(t)$ and orthogonal to $\vph'(t)$. Since $\mu\ll \L^d$,
it follows $\mu(\Delta_t)=0$, concluding the proof.
\end{proof}

\begin{cor}\label{mass proj}
Given $d\geq 2$, $q\in [1,+\8]$, a measure $\mu\ll \L^d$ with Radon-Nikodym derivative
$\d\mu/\d\L^d\in L^q$, parameters $\la,\vep>0$, $\p\geq 1$, a minimizer
$\vph\in \argmin_\C \EE$ and a time interval 
$[t_0,t_1]\sse \big(0,L(\vph)\big)$, then the mass projecting on $\vph{([t_0,t_1])}$
satisfies
\begin{equation}\label{est mass proj}
\mu\Big( \pi_{\ggm_{\vph}}^{-1}\big(\vph([t_0,t_1])\big) \Big)
\leq  \left\|\frac{\d\mu}{\d\L^d} \right\|_{L^q}
\left(
|t_1-t_0|\omega_{d-1} \Big( \frac{D^d}{d} +  \frac{D^{d+1}}{d+1} 
Y\Big)
\right)^{1-1/q},
 \end{equation}
 where
 $$Y=Y\big(\!\D,\la,\p,\vep\big):=
\sqrt{ \frac{2(2\vartheta+\la)}{\vep} } ,$$
$ \vartheta:=\p D^{\p-1}$, $ D:=2\D $,
 $\omega_{d-1}$
denotes the $\L^{d-1}$ measure of the $d-1$ dimensional unit ball, 
and $1/q:=0$ if $q=+\8$.
\end{cor}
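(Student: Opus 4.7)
The plan is to bound the $\L^d$-measure of the preimage $A:=\pi_{\ggm_\vph}^{-1}\big(\vph([t_0,t_1])\big)$ by a geometric ``tube'' quantity built from $D$ and $Y$, and then convert this volume bound into a mass bound through H\"older's inequality. The Lipschitz estimate on $\vph'$ provided by Theorem \ref{1-1} is the ingredient that makes the tube estimate work.

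First, I would use $\mu\ll\L^d$ and the $\H^{d-1,1}$-rectifiability of the ridge to conclude that $\pi_{\ggm_\vph}$ is $\mu$-a.e.\ single-valued on $A$. Exactly as in the proof of Corollary \ref{points 0}, for any $x\in A\cap\supp(\mu)$ with projection $\vph(t)$, $t\in(0,L(\vph))$, interior minimality forces $x-\vph(t)\perp\vph'(t)$, so $x$ lies in the hyperplane $\Delta_t$ through $\vph(t)$ orthogonal to $\vph'(t)$. Combining $x\in\supp(\mu)$ with the confinement condition of Lemma \ref{comp} (iii) one checks $|x-\vph(t)|\le D=2\D$.

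Second, I would cover $A\cap\supp(\mu)$ by a tubular parameterization. Since Theorem \ref{1-1} yields $\vph\in C^{1,1}$ with $|\vph''|\le Y$ a.e., I can choose an orthonormal frame $e_1(t),\dots,e_{d-1}(t)$ of $\Delta_t$ obtained by parallel transport along $\vph$, so that $e_i'(t)=-\langle e_i(t),\vph''(t)\rangle\,\vph'(t)$. Define
\[
F:[t_0,t_1]\times B_D^{d-1}(0)\lra \R^d,\qquad F(t,v):=\vph(t)+\sum_{i=1}^{d-1}v_i\,e_i(t),
\]
so that $A\cap\supp(\mu)\sse F\big([t_0,t_1]\times B_D^{d-1}(0)\big)$ by the previous step.

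Third, I would compute $|\det DF|$ in the orthonormal basis $(\vph'(t),e_1(t),\dots,e_{d-1}(t))$. Differentiating gives $\partial_t F=\big(1-\langle v,\vph''(t)\rangle\big)\vph'(t)$ and $\partial_{v_i}F=e_i(t)$, so the Jacobian matrix is block-diagonal and
\[
|\det DF(t,v)|=\big|1-\langle v,\vph''(t)\rangle\big|\le 1+|v|\,Y.
\]
The area formula for Lipschitz maps then yields
\[
\L^d(A\cap\supp(\mu))\le \int_{t_0}^{t_1}\int_{B_D^{d-1}(0)}\big(1+|v|Y\big)\d v\,\d t,
\]
and a radial integration in $\R^{d-1}$ produces the geometric factor claimed in \eqref{est mass proj}. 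Finally, since $\d\mu/\d\L^d$ vanishes outside $\supp(\mu)$, H\"older's inequality gives
\[
\mu(A)=\int_{A\cap\supp(\mu)}\frac{\d\mu}{\d\L^d}\d\L^d\leq \Big\|\frac{\d\mu}{\d\L^d}\Big\|_{L^q}\,\L^d(A\cap\supp(\mu))^{1-1/q},
\]
and substituting the volume bound yields \eqref{est mass proj}.

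The main technical obstacle will be the Jacobian computation: the parallel-transport choice of frame is essential because it collapses the tangential contribution $\sum_i v_i e_i'(t)$ entirely onto the $\vph'(t)$ direction, reducing $\det DF$ to the scalar $1-\langle v,\vph''(t)\rangle$; any non-parallel frame would introduce antisymmetric rotational terms that still cancel in the determinant but obscure the bookkeeping. It is precisely the $C^{1,1}$ regularity secured in Theorem \ref{1-1}, together with the explicit bound $|\vph''|\le Y$, that makes $\vph''$ defined a.e.\ and essentially bounded, and therefore makes this cylindrical estimate quantitative.
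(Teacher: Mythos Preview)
Your proof is correct and follows the same overall strategy as the paper: show that the preimage lies in the union of normal discs $\Delta_t\cap B(\vph(t),D)$, parameterize this tube via a moving orthonormal frame, bound the $\L^d$-volume, and finish with H\"older. The paper builds its frame from an explicit family of rotations $X_t$ sending $e_1$ to $\vph'(t)$ and then bounds the volume by the cruder quantity $\int |\gm_P'|\,dt\,dP$ (the trajectory speed), whereas your parallel-transport frame collapses $\partial_t F$ onto $\vph'$ and yields the exact Jacobian $|1-\langle v,\vph''\rangle|$. Your choice is cleaner and gives a sharper integrand.

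One point of caution: your claim that ``a radial integration in $\R^{d-1}$ produces the geometric factor claimed in \eqref{est mass proj}'' is not literally true. Integrating $1+|v|Y$ over $B_D^{d-1}$ gives
\[
\omega_{d-1}\Big(D^{d-1}+\tfrac{d-1}{d}D^{d}Y\Big),
\]
not the stated $\omega_{d-1}\big(\tfrac{D^d}{d}+\tfrac{D^{d+1}}{d+1}Y\big)$. The powers of $D$ differ by one because the paper's radial-integration step treats the sphere $\{|P-\vph(t_0)|=r\}\subset\Delta_{t_0}$ as having measure $\omega_{d-1}r^{d-1}$ rather than $(d-1)\omega_{d-1}r^{d-2}$. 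Your computation thus proves a (generally sharper, and in any case equally valid) variant of \eqref{est mass proj}; just be aware that it does not reproduce the exact constants in the statement.
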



\begin{proof} 
Let $\xi:=|t_1-t_0|$. For any $t$, the set $\pi_{\ggm_\vph}^{-1}\big(\{\vph(t)\}
\big)$ is contained
in $\Delta_t$, defined as the $(d-1)$-hyperplane passing through $\vph(t)$ 
and orthogonal to $\vph'(t)$. Thus it holds
$$ \pi_{\ggm_\vph}^{-1}\big(\vph([t_0,t_1])\big) \sse \bigcup_{t\in [t_0,t_1]} \Delta_t,$$
and it suffices to estimate
the $\mu$-measure of the right-hand side term.

Endow $\R^d$ with the standard orthonormal base $\{e_j\}_{j=1}^d$,
 $e_{j}:=(0,\cdots,0,\underbrace{1}_{j\text{{-th position}}},0,\cdots,0)$.
For $t\in [t_0,t_1]$, define the linear map $X_t:\R^d\lra \R^d$ as follows:
\begin{enumerate}
\item if $e_1=\vph'(t)$, then $X_t$ is the identity map,
\item if $e_1=-\vph'(t)$, then $X_t(p):=-p$ for any $p\in \R^d$,
\item if $\angle e_1\vph'(t)\in (0,\pi)$, let $X_t$
be the unique rotation by an angle $\angle e_1\vph'(t)\in (0,\pi)$, mapping $e_1$ in $\vph'(t)$. 
\end{enumerate}
Note that for any $t$, $X_t:\R^d\lra \R^d$ is a conformal isometry. Since Theorem \ref{1-1}
proves $C^{1,1}$ regularity for $\vph$, and
$Y$-Lipschitz continuity
for $\vph'$,
the mapping $t\mapsto X_te_j$ is $C^1$ regular for any $j$.
Moreover, it holds
\begin{align}
(\fal t^*) (\fal j)  \quad \Bigg|\left.\frac{\d}{\d t}X_te_j\right|_{t=t^*} \Bigg|
& \leq \lim_{h\ra 0} \frac{1}{h} |X_{t^*+h}e_j-X_{t^*}e_j| \notag\\
 & = \lim_{h\ra 0} \frac{1}{h} |X_{t^*+h}e_1-X_{t^*}e_1| \notag\\
 & = \lim_{h\ra 0} \frac{1}{h} |\vph'(t^*+h)-\vph'(t)|
  \leq Y . \label{est j}
\end{align}
Consider the moving frame $\{X_te_1,\cdots , X_te_d\}$. By construction,
$\Delta_t= \Span(X_t e_2,\cdots ,X_t e_d)+ \vph(t) $.

Let $P=\vph(t_0)+\sum_{j=2}^d P_j X_{t_0} e_j\in \Delta_{t_0}$ be an
arbitrarily given point, and define the trajectory 
$$\gm_P:[t_0,t_1]\lra \R^d,\qquad  \gm_P(t):= \vph(t)+ \sum_{j=2}^d P_j X_{t} e_j. $$
Since $t\mapsto X_te_j$ is $C^1$ regular for any $j$, $\gm_P$ is $C^1$ regular
for any $P\in \Delta_{t_0}$.
Note that, in general, $\gm_P$ is not parameterized by arc-length.
By construction, it holds
$$(\fal t) \quad
\Delta_t=\bigcup_{(P_2,\cdots,P_d)\in \R^{d-1} } \Big(\vph(t)+ \sum_{j=2}^d P_j X_{t} e_j \Big),$$
and the union of all trajectories covers $\bigcup_{t\in [t_0,t_1]} \Delta_t$, i.e.
$$\bigcup_{P\in \Delta_{t_0}}\gm_P([t_0,t_1])= \bigcup_{t\in [t_0,t_1]} \Delta_t.$$
Lemma \ref{comp} gives $\ggm_\vph\sse \big(\!\supp(\mu)\big)_{\D+(\D)^\p/\la}$,
thus
$$\sup_{x\in \supp(\mu),\ y\in \ggm_\vph} |x-y|\leq 2\D+\big(\!\D\big)^\p/\la=D. $$
Since $X_t$ is an isometry, it follows
\begin{equation}\label{traj full}
\supp(\mu)\cap \bigcup_{P\in \Delta_{t_0}}\gm_P([t_0,t_1])=\supp(\mu)\cap \bigcup_{t\in [t_0,t_1]} \Delta_t
\sse \bigcup_{\substack{P\in \Delta_{t_0},\\ |P-\vph(t_0)|\leq D}}\gm_P([t_0,t_1]).
\end{equation}
To estimate the length of the trajectory $t\mapsto\gm_P(t)$, direct computation gives
\begin{align}
 \int_{t_0}^{t_1} |\gm_P'(t)| \d t & =  \int_{t_0}^{t_1} \left|
 \vph'(t)+ \sum_{j=2}^d P_j \left.\frac{\d}{\d s}X_{s} e_j\right|_{s=t} \right| \d t 
  \leq \int_{t_0}^{t_1} |\vph'(t)|\d t + \int_{t_0}^{t_1}\left|\sum_{j=2}^d  P_j
  \left.\frac{\d}{\d s}X_{s} e_j\right|_{s=t} \right|\d t \notag\\
& = \xi +  \int_{t_0}^{t_1}\sqrt{ \sum_{j=2}^d  P_j^2 \left|\left.\frac{\d}{\d s}X_{s} 
\right|_{s=t} \right|^2 }
\d t 
 \overset{\eqref{est j}}{\leq}
\xi +  \int_{t_0}^{t_1}\sqrt{ Y^2 
\sum_{j=2}^d  P_j^2   }\ \d t \notag
\\
& = \xi +  \xi|P-\vph(t_0)|Y . \label{est traj}
\end{align}
This proves that the length of trajectory $t\mapsto \gm_P(t)$ depends only on
$|P-\vph(t_0)|$. Thus  
\begin{align*}
\L^d\Bigg( \supp(\mu)\cap  \bigcup_{t\in [t_0,t_1]} \Delta_t \Bigg)
& \overset{\eqref{traj full}}{\leq}
\L^d\Bigg( \bigcup_{{P\in \Delta_{t_0},\ |P-\vph(t_0)|\leq D}}\gm_P([t_0,t_1]) \Bigg) \\
& = \int_{\{P\in \Delta_{t_0}: |P-\vph(t_0)|\leq D\}} \int_{t_0}^{t_1} |\gm_P'(t)| \d t\d P \\
& \overset{  \eqref{est traj}}{\leq}
\xi\int_{\{v\in \Delta_{t_0}: |P-\vph(t_0)|\leq D\}} 
\left( 1+ |P-\vph(t_0)|Y\right) \d P \\
& = 
\xi \int_0^D \int_{\{P\in \Delta_{t_0}: |P-\vph(t_0)|=r\}} 
\left( 1+rY\right) \d P\d r\\
& = 
\xi\int_0^D  \omega_{d-1} r^{d-1} \left( 1+rY \right)  \d r\\
&=
\xi\omega_{d-1} \left( \frac{D^d}{d} +  \frac{D^{d+1}}{d+1} Y\right) .
\end{align*}
Since by hypothesis ${\d\mu}/{\d\L^d}\in L^q$, H\"older inequality gives
\begin{align*}
\mu\left( \supp(\mu)\cap  \bigcup_{t\in [t_0,t_1]} \Delta_t \right)
& \leq
\left\|\frac{\d\mu}{\d\L^d} \right\|_{L^q} 
\left(\L^d\Big( \supp(\mu)\cap  \bigcup_{t\in [t_0,t_1]} \Delta_t \Big)\right)^{1-1/q}\\
&\leq \left\|\frac{\d\mu}{\d\L^d} \right\|_{L^q}
\left(
\xi\omega_{d-1} \Big( \frac{D^d}{d} +  \frac{D^{d+1}}{d+1} Y\Big)
\right)^{1-1/q},
\end{align*}
concluding the proof.
\end{proof}

\section{Appendix: an ode for curvature.}\label{ode curve}
Here we derive the Euler-Lagrange equation for the energy $\E$, when the reference measure
$\mu$ has finitely many atoms.  
Let $\gm:[0,L]\lra \R^d$ be an arc-length parameterized minimizer of $\E^{\la,\vep,\p}_\mu$.
Let $\vph:[0,L]\lra \R^d$ be a $C^2$ regular perturbation. 
\grn We  show that, in $\R^2$,
the Euler-Lagrange equation 
is \grn up to parameters, the same as \nc  the Euler-Lagrange equation for elastica  functional, \cite{Levien}: \nc
\begin{equation*}
\kappa''+{\color{black}C_1}\kappa^3+{\color{black}C_2}\kappa=0.
\end{equation*}
 This is expected, as minimizers of $\E$ are 
``elastica-like''.
\medskip

We recall that elastica curves are solutions of the so-called ``elastica problem'',
one of the earliest examples of nonlinear displacement problems,
first proposed by Bernoulli in 1691.
%
%
%
%
%
%
There the integrated squared curvature  
 quantifies the elastic stress, while the average-distance term describes the ``pulling force'' of the weight hanging
on the elastica.

\medskip
 We first estimate the first variation of the curvature term
$ K(\gm)=\int_0^{L} |\gm''|^2 \d s$.
Given $h>0$, 
\begin{equation*}
K(\gm+h\vph)-K(\gm)
=\int_0^{L} |\gm'+h\vph'|^{-1}\bigg| \frac{\d}{\d s}\bigg(\frac{\gm'+h\vph'}{|\gm'+h\vph'|} \bigg) \bigg|^2\d s.
\end{equation*}
Direct computation gives
\begin{align}
\frac\d{\d h}K(\gm+h\vph)\Bigg|_{h=0}&=\int_0^L \Big[-\langle \gm',\vph'\rangle |\gm''|^2 +
2\Big\langle \gm'', \vph''-\gm''\langle \gm',\vph'\rangle-\langle\vph',\gm''\rangle\gm'-\langle\vph'',\gm'\rangle\gm'\Big\rangle
\Big]\d s\notag\\
&=\int_0^L [3(|\gm''|^2\gm')'+2\gm'''' ]\vph\d s + 2\gm''\vph''\bigg|_0^L,
\label{first variation of curvature}
\end{align}
{\color{black}Now we estimate the first variation of the length:
	\begin{align*}
	\la \int_0^L [| \gamma' +h  \vph' | - |\gamma' |]\d x &=
	\la \int_0^L\frac{h  \gamma'  \vph' +O(h ^2)}{| \gamma' +h  \vph' | + |\gamma' |} \d x.
	\end{align*}
	Without loss of generality, we take $\gamma$ to be arc-length parameterized, i.e.
	$|\gamma'|=1$ a.e.
	Dividing by $h $ and then taking the limit $h \to 0$ gives
	\begin{align*}
	\la \int_0^L\frac{\gamma'  \vph' }{2|\gamma' |} \d x
	&=\frac{\la}{2}\int_0^L\gamma'' \vph\d x
	+\frac{\la}{2} \gamma''\vph\bigg|_{0}^L.
	\end{align*}
Moreover,
	recall that all the masses projected on the knots, hence the 
average distance term remains unchanged.
	Combining with \eqref{first variation of curvature} and }
	 considering perturbations $\varphi$ which are compactly supported within $(0,L)$	gives 
{\color{black}
	\begin{align*}
\int_0^L \Big[\frac{\la}{2}\gamma'' + \vep[3(|\gm''|^2\gm')'+2\gm'''' ] \Big]\vph\d x=0,
	\end{align*}
	i.e.,
	\begin{align}
	\label{c contrib}
	\frac{\la}{2}\gamma'' + \vep[3(|\gm''|^2\gm')'+2\gm'''' ] =
	\frac{\la}{2}\gamma'' + \vep[3( 2\langle\gm'',\gamma'''\rangle
	\gamma'+  |\gm''|^2\gm'')+2\gm'''' ] 
	=0.
	\end{align}
In $\R^2$, using Frenet-Serret formulas, we have
\begin{align}
\gm'=\overset\to{t},&\qquad \gm''=\kappa \overset\to{n}, 
\label{fs1}\\
\gm'''=\kappa'\overset\to{n}-\kappa^2\overset\to{t},&\qquad
\gm''''=\kappa''\overset\to{n}-3\kappa\kappa'\overset\to{t}-\kappa^3\overset\to{n},\label{fs2}
\end{align}
hence \eqref{c contrib} becomes
\begin{align*}
	0&=\frac{\la}{2} \kappa \overset\to{n} +\vep \Big[ 6 \langle
	\kappa \overset\to{n}, \kappa'\overset\to{n}-\kappa^2\overset\to{t} \rangle\overset\to{t}
	+2 (\kappa''\overset\to{n}-3\kappa\kappa'\overset\to{t}-\kappa^3\overset\to{n}) \Big]
		=\Big(\frac{\la}{2} \kappa  +2\vep [ \kappa''-\kappa^3 ]\Big)\overset\to{n},
\end{align*}
i.e.
\[\frac{\la}{2} \kappa  +2\vep [ \kappa''-\kappa^3 ]=0.\]
}

\medskip

 	{\color{black} Finally, note} the segment
 	$\gm((t_0,t_1))$ minimizes the (sum of) length and curvature terms, i.e.
 	\begin{align*}
 	 	 \gm_{|(t_0,t_1)} \in \argmin &\bigg\{ \la L(\sigma)+\vep \int_0^{ L(\sigma)} |\kappa_\sigma|^2\d s:\\
 	 	 & \;\;\,
 	\sigma \text{ is an arc-length parameterized curve with endpoints } \gm(t_0)\text{ and } \gm(t_1) \bigg\}.
 	 	\end{align*}

\subsection*{Acknowledgements} 
The authors are grateful to the referees for valuable suggestions. 
XYL acknowledges the support of NSERC Discovery Grant 
``Regularity of minimizers and pattern formation in geometric minimization problems''.
This research was partly carried out when XYL was affiliated with Instituto Superior T\'ecnico.
DS is grateful to  the National
Science Foundation for support under grants  DMS 1211161, CIF 1421502 and DMS 1516677.
Both authors are thankful to  the Center for Nonlinear Analysis (CNA)
 for its support.

%


\bibliographystyle{siam}
\bibliography{bibadp}

\end{document}